\newcommand{\Decbot}[1]{\operatorname{Dec}_\bot{}\kern-2pt{#1}}
\newcommand{\CC}{\mathcal{C}}
\newcommand{\isopil}{\stackrel{\raisebox{0.1ex}[0ex][0ex]{\(\sim\)}}%
			{\raisebox{-0.15ex}[0.28ex]{\(\rightarrow\)}}}
\providecommand{\kat}[1]{\text{\textbf{\textsl{#1}}}}
\newcommand{\into}{\rightarrowtail}
\newcommand{\onto}{\twoheadrightarrow}
\DeclareMathAlphabet{\mathbbe}{U}{bbold}{m}{n}
\newcommand{\simplexcategory}{\mathbf{\Delta}}
\newcommand{\op}{^{\text{{\rm{op}}}}}
\renewcommand{\divides}{\mathbin{|}}
\newcommand{\divposet}{(\N^\ast\!, \,|\,)}
\newcommand{\divposetnoparens}{\N^\ast\!, \,|\,}
\newcommand{\multmonoid}{(\N^\ast\!, \,{\cdot}\,)}
\newcommand{\multmonoidnoparens}{\N^\ast\!, \,{\cdot}\,}
\newcommand{\incCoalg}[1]{\mathsf{IncCoalg}(#1)}
\newcommand{\incAlg}[1]{\mathsf{IncAlg}(#1)}
\newcommand{\redIncAlg}[1]{\mathsf{IncAlg}^{\operatorname{red}}(#1)}
\newcommand{\redIncCoalg}[1]{\mathsf{IncCoalg}^{\operatorname{red}}(#1)}
\providecommand{\norm}[1]{\left| {#1}\right|}
\newcommand{\st}{\operatorname{st}}
\tikzset{
  /tikz/commutative diagrams/on top/.style={inner sep=1pt, description}
}
\theoremstyle{plain}
\newtheorem{theo}{Theorem}[subsection]
\newtheorem{lemma}[theo]{Lemma}
\newtheorem{cor}[theo]{Corollary}
\newtheorem{prop}[theo]{Proposition}
\theoremstyle{definition}
\newtheorem{defi}[theo]{Definition}
\newtheorem{remark}[theo]{Remark}
\newtheorem{notation}[theo]{Notation}
\newcommand{\parti}{\mathcal{NCP}}
  \newcommand{\partipres}[1]{\parti_{{#1}\text{\rm -pres}}{}}
\newcommand{\N}{\mathbb{N}}
\newcommand{\Q}{\mathbb{Q}}
\newcommand{\C}{\mathbb{C}}
\title{Noncrossing arithmetic}
\author{Kurusch Ebrahimi-Fard$^a$, Lo\"\i c Foissy$^b$, Joachim Kock$^c$, Fr\'ed\'eric Patras$^d$\\
{\small \it $^a$Department of Mathematical Sciences}\\
{\small \it Norwegian University of Science and Technology (NTNU)}\\
{\small \it 7491 Trondheim, Norway.}\\
{\small \it and Lie--St{\o}rmer Center  (Troms{\o})}\\
{\small \it email: {\rm kurusch.ebrahimi-fard@ntnu.no}}

\smallskip

{\small \it $^b$Univ. Littoral C\^ote d'Opale, UR 2597}\\
{\small \it Laboratoire de Math\'ematiques Pures et Appliqu\'ees Joseph Liouville}\\
{\small \it F-62100 Calais, France}\\
{\small \it email: {\rm foissy@univ-littoral.fr}}

\smallskip

{\small \it $^c$Department of Mathematical Sciences}\\
{\small \it University of Copenhagen, Denmark}\\
{\small \it and Centre de Recerca Matem\`atica (Barcelona)}\\
{\small \it email: {\rm kock@math.ku.dk}}

\smallskip

{\small \it $^d$Universit\'e C\^ote d'Azur, CNRS, UMR 7351}\\ 
{\small \it Laboratoire J.A.Dieudonn\'e}\\
{\small \it Parc Valrose, 06108 Nice Cedex 02, France.}\\
{\small \it email: {\rm patras@unice.fr}}}
\begin{document}

\date{}

\maketitle

\begin{abstract}
  Higher-order notions of Kreweras complementation have appeared in the
  literature in the works of Krawczyk, Speicher, Mastnak, Nica, Arizmendi,
  Vargas, and others. While the theory has been developed primarily for specific applications in free probability, it also possesses an elegant, purely combinatorial core that is of independent interest.
  The present article aims at offering a simple account of various aspects
  of higher-order Kreweras complementation on the basis of elementary
  arithmetic, (co)algebraic, categorical and simplicial properties of
  noncrossing partitions. The main idea is to consider noncrossing partitions as providing
  an interesting noncommutative analogue of the
  interplay between the divisibility poset and the multiplicative monoid of positive integers.
  Just as the divisibility poset can be regarded as the decalage of the
  multiplicative monoid, we exhibit the lattice of noncrossing partitions as
  the decalage of a partial monoid structure on noncrossing partitions
  encoding higher-order Kreweras complements. While our results may be considered familiar, several of the viewpoints can be regarded as novel, offering an efficient approach both conceptually and computationally.
\end{abstract}

\tableofcontents


\section{Introduction}
\label{sec:intro}

Although noncrossing partitions were initially studied out of combinatorial interest \cite{Kreweras:1972}, they have since found applications in many areas of mathematics and have been extensively studied \cite{Simion:2000}, \cite{McCammond}. They are enumerated by
Catalan numbers, ``probably the most ubiquitous sequence of numbers in
mathematics'' \cite{Stanley:Catalan}, and can be put in bijection with many
families of combinatorial objects,
such as binary trees, plane binary trees, and Dyck paths, to mention a few.
The set of noncrossing partitions of a linear order $[n] := \{1,2,\ldots,n\}$
carries an important lattice structure, which has been central to most 
applications of noncrossing partitions. However, considering them
together for all $n$, noncrossing partitions also carry shuffle-algebra 
structures~\cite{EbrahimiFard-Patras:1409.5664} (different from the one investigated in the present article), as well as two operad
structures~\cite{EbrahimiFard-Foissy-Kock-Patras:1907.01190}, that further 
enrich the theory. All this structure can be transported to other instances 
of Catalan combinatorics.

One feature characteristic for noncrossing partitions is the notion of
Kreweras complement, introduced by Kreweras in his foundational
article~\cite{Kreweras:1972}. The Kreweras complement defines an
automorphism of the set of noncrossing partitions of $[n]$, which is not an
involution but has period $n$.
The standard way to define the Kreweras complement of a
noncrossing partition $\alpha$ of  $[n]$
is to embed this set as the odd elements of $[2n]$ and look for the
noncrossing partition $\beta$ of the set of even integers in $[2n]$ such
that the union $\alpha\cup \beta$ is a noncrossing partition of $[2n]$ and is
maximal among such partitions. In a more algebraic notation, $\beta$ solves
the equation
$$
	(\alpha\shuffle_n\beta)\vee \{\{1,2\},\ldots,\{2n-1,2n\}\}=\{\{1,\ldots,2n\}\},
$$
for which $\alpha\shuffle_n\beta$, the perfect shuffle of $\alpha$ and
$\beta$ (see Definition~\ref{perfsh}), is required to
be noncrossing --- whereas $\vee$ is the join in the lattice of noncrossing
partitions of $[2n]$ (rigorous definitions will be given later).

In the present paper we are interested in higher versions of Kreweras
complement, motivated by applications in free probability (as very briefly
indicated in Section~\ref{sec:context} below). We regard these higher
Kreweras complements as a way to provide a noncommutative generalization of
certain features of the positive integers, more precisely the interplay
between the divisibility poset $\divposet$ and the multiplicative monoid
$\multmonoid$. Both feature notions of incidence (co)algebras and M\"obius
inversion. For the poset, this is the standard theory initiated by
Rota~\cite{Rota:Moebius}; for monoids the analogous constructions were
introduced by Cartier and Foata~\cite{Cartier-Foata}.

  The relationship between the two approaches can be formulated elegantly
  using the fact that posets and monoids are both examples of categories:
  Content, Lemay and Leroux~\cite{Content-Lemay-Leroux} observed that the
  assignment $a\divides b \mapsto b/a$ constitutes a functor from the
  category $\divposet$ to the category $\multmonoid$, and that this functor
  is CULF (``conservative'' and possessing ``unique lifting of
  factorizations''). They also identified the CULF functors as those that induce
  coalgebra homomorphisms at the level of incidence coalgebras. This
  coalgebra homomorphism is precisely the one from the (raw) incidence
  coalgebra of the divisibility poset to the reduced incidence coalgebra,
  where two `intervals' $a\divides b$ and $a'\divides b'$ in the divisibility
  poset are identified when $b/a = b'/a'$. This is important as it is 
  usually the setting of posets that is used to stage the whole theory, 
  whereas it is rather the reduced incidence algebra (which is the 
  incidence algebra of the monoid) that actually matters for M\"obius 
  inversion and related phenomena and tools.
  
  It was observed more recently~\cite{Galvez-Kock-Tonks:1612.09225} that the
  CULF map of Content--Lemay--Leroux is actually induced by decalage of
  simplicial sets: the (nerve of the) divisibility poset is the lower decalage
  of the (bar complex of the) multiplicative monoid; it is a general fact that
  the map back from a decalage of a simplicial set is CULF whenever the 
  simplicial set is a decomposition space, a class of simplicial sets that 
  contains nerves of categories (and in particular posets and monoids).

  We show that all these features carry over to the noncommutative setting
  of noncrossing partitions. Precisely, we exhibit the lattice of noncrossing
  partitions as the lower decalage of a simplicial set obtained by
  defining a suitable composition product on noncrossing partitions. The only 
  caveat, and likely the reason why this composition product has remained under the radar until now, is that it does not define a genuine monoid but rather a partial monoid in the sense of Segal~\cite{Segal:1973}. 
However, partial monoids are examples of decomposition spaces \cite{Bergner-Osorno-Ozornova-Rovelli-Scheimbauer:1609.02853}, and the theory of incidence (co)algebras and M\"obius inversion applies to decomposition spaces just as it does to posets, monoids, and categories \cite{Galvez-Kock-Tonks:1512.07573}.  
  The incidence coalgebra of this partial monoid is of some importance in
  free probability: the corresponding convolution algebra contains the multiplicative functions used in Speicher's free convolution
  (see Nica--Speicher~\cite{Nica-Speicher:Lectures}, Lecture 18).

\ \ \par

The article is organized as follows. Section~\ref{sec:context} briefly
gives some motivation and elements of context for the current main
application domain for our developments: free probability.
Section~\ref{sec:as} lists various algebraic structures on noncrossing
partitions, culminating with the definition of an arithmetic-inspired
composition product that turns out to encode all the information of Kreweras
complementation and its higher generalizations.
We will show, as an
application, how various key results of the theory can be formulated in 
terms of this partial monoid structure.
Section~\ref{sec:class} gives an account of classical incidence (co)algebras of
positive integers, with categorical and simplicial interpretations. 
The final Section~\ref{sec:coalg} develops coalgebraic,
categorical and simplicial properties of noncrossing partitions, showing
that they behave as a noncommutative version of the integers with respect
to M\"obius-inversion type calculus. We also briefly investigate
coalgebraic properties of $k$-divisible noncrossing partitions.

\begin{notation}
  The set $\{1,\ldots,n\}$ is denoted $[n]$. We use the rationals $\Q$ as
  ground field for our vector spaces. 

\end{notation}


\section{Context and motivation}
\label{sec:context}

Higher-order Kreweras complements can be defined by generalizing the definition we have recalled earlier, replacing the odd/even embedding of $[n]$ into $[2n]$ by the analogous embedding into $[kn]$. 
That these notions are meaningful is supported by combinatorial results in 
probability theory appearing in the works of  Krawczyk, Mastnak, Nica and 
Speicher \cite{Krawczyk-Speicher}, \cite{Nica-Speicher:Lectures}, \cite{Mastnak-Nica:TAMS2010}. Our work was initially motivated by the properties of the distributions of products of random variables in free probability -- specifically, multiplicative convolution. Similarly, our previous, technically independent article \cite{EbrahimiFard-Foissy-Kock-Patras:1907.01190} was driven by the properties of sums of random variables in free probability and additive convolution.

Recall from \cite{Nica-Speicher:Lectures} that a noncommutative probability
space is a pair $(A,\phi)$ consisting of an associative algebra $A$ and a unital linear
form $\phi$ on $A$. Free cumulants are multilinear
maps $\kappa_n$ from $A^{\otimes n}, n\in\N^\ast$, to $\Q$ (in free probability one would usually take $\C$ as a ground field, but  this choice has no relevance for the matters discussed in the present article and we stick therefore to $\Q$) defined by
induction (or M\"obius inversion) in the lattice of noncrossing partitions through the ($n$th-order) free moment-cumulant relation
$$
	\phi(a_1\cdots a_n)=\sum\limits_{\pi \in \parti(n)}\kappa_\pi(a_1,\ldots,a_n).
$$
Here, $\parti(n)$ stands for the set of noncrossing partitions of $[n]$ 
and $\kappa_\pi$ denotes the multiplicative extension of free cumulants 
to noncrossing partitions, that is, if $\pi=\{\pi_1,\ldots, \pi_k\} \in \parti(n)$, 
then
$$
\kappa_\pi(a_1,\ldots,a_n):=\prod\limits_{i=1}^k\kappa_{\pi_i}(a_1,\ldots,a_n)
:=
\prod\limits_{i=1}^k
	\kappa_{\norm{\pi_i}}(a_{n_1^i},\dots,a_{n_{\norm{\pi_i}}^i}),
$$ 
for $\pi_i=\{n_1^i,\ldots,n_{\norm{\pi_i}}^i\}$. Analogous to cumulants in classical probability, free cumulants in free probability characterize free independence, which is a good notion of independence in noncommutative probability theory \cite{BGhSchur02,Muraki02}: subalgebras $A_1,\dots,A_p$ of $A$ are freely independent if and only if the free cumulants $\kappa_n(a_1,\dots,a_n)$ vanish whenever at least two elements $a_i$ belong to different subalgebras in $A_1,\ldots,A_p$.

One motivation for the present work
is the following result
connecting computations in free probability with Kreweras complements, 
a consequence of \cite[Thm.~1.12]{Nica-Speicher:Lectures}: for
free cumulants of products of random variables we have
\begin{equation}
\label{eq1}
	\kappa_m ((a_1 \!\cdots a_p) ,\ldots, (a_{p(m-1)+1} \!\cdots a_{pm})) 
	=\sum\limits_{\pi\in \partipres{p}(m) \atop \hat\pi=\{[pm]\}}\kappa_\pi (a_1 ,\ldots , a_{pm}),
\end{equation}
where 
\begin{enumerate}
\item for $1\leq j\leq p$, $a_j,a_{p+j}, \dots ,a_{p(m-1)+j}$, $i=0,\dots,m-1$, belong to $A_j$, where $A_1,\dots ,A_p$ are freely independent subalgebras of $A$,
\item in the summation on the right-hand side of \eqref{eq1}, $\partipres{p}(m)$ denotes the set of $p$-preserving noncrossing partitions, i.e.~such that $i,j$ can be in the same block only if $i=j \mod p$,
\item the partition $\hat\pi$ stands for the finest noncrossing partition which is coarser than $\pi$ and such that each set $\{pi+1,\dots,p(i+1)\}$ is a subset of a single block of $\hat\pi$ for all $i=0,\ldots,m-1$.
\end{enumerate}

Below, we will not make further references to free probability. However, we point out that
formula \eqref{eq1} will drive our constructions on partitions $\pi \in
\partipres{p}(m)$ such that $\hat\pi=\{[pm]\}$. They are called $p$-completing in the
literature \cite[Def.~2.1]{Arizmendi-Vargas:2012} and provide a natural
$p$-fold generalization of classical Kreweras complementation. When $p=2$,
the sum in (\ref{eq1}) is indeed equivalently formulated over pairs of a noncrossing
partition and its Kreweras complement, as described previously in this
introduction --- see also Lemma~\ref{Kco2} below.


\section{Algebraic structures}
\label{sec:as}

We shall now go through a series of algebraic and coalgebraic structures on
noncrossing partitions. The lattice structure \ref{ssec:os}, which goes
back to Kreweras~\cite{Kreweras:1972}, is the usual way to approach
noncrossing partitions; the Nica--Speicher
Lectures~\cite{Nica-Speicher:Lectures} is a standard reference for this. The
Kreweras complementation~\ref{ssec:Kc} is also well
studied~\cite{Nica-Speicher:Lectures}. The power maps \ref{ssec:power}, the
concatenation product~\ref{ssec:concat}, the complete shuffle
product~\ref{ssec:perfect} are introduced here for technical purposes, together 
with the idea that there is a proper ``arithmetic'' behavior of
noncrossing partitions. We are not aware of specific references for them, but they are anyway based on elementary and classical constructions (in combinatorics, cards shufflings...); 
and they are of
course closely related to well-known structures on noncrossing partitions (see in particular 
Arizmendi--Vargas~\cite{Arizmendi-Vargas:2012}). 
The composition product \ref{ssec:partialmonoid} was studied by
Biane~\cite{Biane:1997} (under the trace map into the permutation groups, in 
fact mentioned already by Kreweras~\cite{Kreweras:1972}), but without
noticing that it defines a partial monoid. Generally, the utility of partial
monoids \ref{ssec:partialmonoid} does not seem to have been well appreciated outside 
algebraic topology~\cite{Segal:1973}, and the
crucial fact that partial monoids have incidence coalgebras (which we come to 
in \ref{ss1}) is quite
recent~\cite{Galvez-Kock-Tonks:1512.07573},
\cite{Bergner-Osorno-Ozornova-Rovelli-Scheimbauer:1609.02853}.

Recall (from \cite{Kreweras:1972}, \cite{Nica-Speicher:Lectures})
that a partition $\pi=\{\pi_1,\dots , \pi_l\}$ of $[n]$ is 
noncrossing, i.e., $\pi\in \parti(n)$, if and only if there are no distinct blocks
$\pi_i$ and $\pi_j$ such that $$\exists a,c\in \pi_i,\ b,d\in \pi_j \mid 
a<b<c<d.$$ 

Noncrossing partitions of an arbitrary totally ordered set $S$ are defined
similarly, and form a lattice $\parti(S)$. An increasing bijection
$\phi:S\isopil T$ induces a bijection $\parti(S)\isopil\parti (T)$ denoted
$\parti(\phi)$. In particular, the integer translation by $p$ of an element
$\pi$ of $\parti(n)$ is a noncrossing partition of
$[n]+p:=\{1+p,\ldots,n+p\}$ that we denote $\pi+p$. Similarly, the dilation
by $p$ of an element $\pi$ of $\parti(n)$ is a noncrossing partition of
$p\cdot [n]:=\{p,2p,\ldots,np\}$ that we denote $p\cdot \pi$.
Finally, given an arbitrary totally ordered finite set $S$ of cardinality
$n$, and given a noncrossing partition $\beta\in\parti(S)$, we write
$\st(\beta)$ for the noncrossing partition in $\parti(n)$ obtained by
transporting $\beta$ along the unique increasing bijection between $S$ and
$[n]$. For example, $\st(\{\{1,8\},\{3,5\}\})=\{\{1,4\},\{2,3\}\}$.

The blocks of a noncrossing partition, $\pi=\{\pi_1,\dots , \pi_l\}$, are
ordered by $\pi_i \preceq \pi_j$ if and only if $\min(\pi_j) \leq \min(\pi_i) \leq
\max(\pi_i) \leq \max(\pi_j)$. (That is, the block $\pi_i$ is equal to $\pi_j$ or nested inside it if $\min(\pi_j) < \min(\pi_i) \leq
\max(\pi_i) < \max(\pi_j)$.)

A noncrossing partition in $\parti(n)$ is 
{\it irreducible} 
if and only if it has a unique maximal block for the order $\prec$, that
is, if $1$ and $n$ belong to the same block. In general, the 
irreducible
components of a noncrossing partition are the subsets of $\pi$ of the
form $\{C \mid C\preceq B\}$, where $B$ is a maximal block. The same
definition extends to $\parti(S)$. For example, the 
irreducible
components of
$\{\{1,3\},\{2\},\{4,8\},\{5,6,7\}\}$ are $\{\{1,3\},\{2\}\}$ and
$\{\{4,8\},\{5,6,7\}\}$. The 
irreducible
components of
$\{\{1,4\},\{2\},\{5,10\},\{6,7\}\}$ are $\{\{1,4\},\{2\}\}$ and
$\{\{5,10\},\{6,7\}\}$.


\subsection{Order structure}
\label{ssec:os}

The set $\parti(S)$ has a natural partial order of coarsening
which we write using notation borrowed from arithmetic: $\pi \divides \mu$ if and only
if every block of $\pi$ is contained in a block of $\mu$. The opposite order is the refinement order: $\mu$ is coarser than $\pi$, that is finer than $\mu$.
Coarsening makes
$\parti(S)$ a lattice. The meet and join are denoted as usual $\pi\wedge
\mu$ and $\pi\vee\mu$, respectively. 
The meet of two noncrossing partitions agrees with their meet as partitions; this follows directly from the computation of the meet of partitions whose blocks are intersections of blocks of the two partitions whose meet is taken.
The construction of the join of two noncrossing partitions $\alpha$ and $\beta$ is slightly more complex: one can for example take their join $\gamma$
in the lattice of partitions and construct $\alpha\vee\beta$ as the noncrossing closure of $\gamma$ (the noncrossing closure of $\gamma$ is the 
smallest noncrossing partition larger than $\gamma$ in the partition lattice, it can be concretely obtained from $\gamma$ for example by recursively merging the blocks in $\gamma$ that cross each other).

The minimal partition (whose blocks are
singletons) is written $0_S$ or simply $0_n$ when $S=[n]$. The maximal
element (with only a single block) is written $1_S$, respectively $1_n$ if $S=[n]$.
 
As for any locally finite poset (see Rota~\cite{Rota:Moebius}), we can associate to $\parti(n)$ an incidence (co)algebra. 

\begin{defi}[Raw incidence (co)algebra]\label{incidal}
  The incidence coalgebra $\incCoalg{\parti(n),\divides\,}$ of the poset $(\parti(n),\divides\,)$ is 
  spanned as a vector space by the intervals $[\alpha,\gamma]$ (consisting 
  of all $\beta$ with $\alpha\divides\beta\divides\gamma$), and with 
  comultiplication given by
  $$
 	\Delta([\alpha,\gamma]) = \sum_{\alpha\divides\beta\divides\gamma} 
	[\alpha,\beta] \otimes [\beta,\gamma]
  $$
  and counit the ``Kronecker delta'': $\partial([\alpha,\gamma]) = 1$ for $\alpha=\gamma$, and zero else.
  
The (raw) incidence algebra $\incAlg{\parti(n),\divides\,}$ is given by linear functions on $\incCoalg{\parti(n),\divides\,}$
with multiplication given by the convolution product
$$
  	(f\ast g)(\alpha,\gamma):=\sum\limits_{\alpha\divides \beta\divides \gamma}f(\alpha,\beta)g(\beta,\gamma) 
$$
and unit $\partial$, where, for notational simplicity we have abbreviated $f([\alpha,\gamma])$ to $f(\alpha,\gamma)$.
\end{defi}
 
The lattice structure of noncrossing partitions has several interesting
properties, among which one will be useful later in this article. Consider
the set $\parti_\pi(n)$ of noncrossing partitions
$\mu\in\parti(n)$ containing a given set of disjoint (and noncrossing) blocks
$\pi_1,\ldots,\pi_k$ with $\pi_i=\{x_1^i,\ldots,x_{\norm{\pi_i}}^i\}$. The order on $\parti(n)$ restricts to an order on $\parti_\pi(n)$. 

\begin{lemma}\label{relatlatt}
 The poset $\parti_\pi(n)$ is
  a sublattice of $\parti(n)$. It is isomorphic as a lattice to a
  cartesian product of lattices $\parti(S_j)$ where the $S_j$ form a partition of $[n]-\bigcup\limits_{i=1}^k\pi_i$.
\end{lemma}

Indeed, write $\pi_{\operatorname{min}}$ for the minimal partition in the poset $\parti_\pi(n)$: it is the
noncrossing partition containing the $\pi_i$ and the singletons $\{x\}$,
where $x$ runs over the elements of $[n]$ that are not contained in the blocks $\pi_i$. 
For any such $x$, write $\pi_x:=\min\{\pi_i,\{x\}\prec\pi_i\}$ if the set $\{\pi_i,\{x\}\prec\pi_i\}$
is non empty and $\pi_x:=\emptyset$ else. That $\pi_x$ is well defined follows from general elementary properties
of noncrossing partitions: if there are blocks $\pi,\pi',\pi''$ of a 
noncrossing partition such that $\pi\prec \pi',\pi''$, then either 
$\pi\prec\pi'\prec\pi''$
or $\pi\prec\pi''\prec\pi'$. This can be deduced easily from the fact that $\min(\pi')<\min(\pi)\leq\max(\pi)<\max(\pi')$, that
$\min(\pi'')<\min(\pi)\leq\max(\pi)<\max(\pi'')$, and that $\pi'$ and $\pi''$ do not cross. We call this property the tree-ordering property of blocks.

Write now $\kappa_i:=\{x|\pi_x=\pi_i\}$ and $\kappa_0:=\{x|\pi_x=\emptyset\}$.
It follows from the tree-ordering property of blocks that if $\mu\in\parti_\pi(n)$ and $\zeta$ is a block of $\mu$ not a $\pi_i$, it is then contained
in a $\kappa_i,\ i\in\{1,\cdots,k\}$ or in $\kappa_0$. Furthermore, if $\zeta$ is contained in $\kappa_i$, then $\zeta\prec \pi_i$ (now for the ordering
of blocks in $\mu$) and therefore, as $\mu$ is noncrossing, there exists a unique $l,\ 1\leq l<|\pi_i|$ such that
$x^i_l<x<x^i_{l+1}$ for $x\in\zeta$. 

Let us now write $\kappa_i^l$ for $\{x|\pi_x=\pi_i \  \& \ x^i_l<x<x^i_{l+1}\}$. In conclusion, we get:
$\mu \in\parti_\pi(n)$ if and only if it is a noncrossing partition such that
\begin{itemize}
\item it contains the $\pi_i$ as blocks;
\item each of its blocks is contained in one of the subsets $\kappa_0,\kappa_1^1,\cdots,\kappa_1^{|\pi_1|-1},\cdots,\kappa_k^1,\cdots,\kappa_k^{|\pi_k|-1}$.
\end{itemize}

The maximal element in $\parti_\pi(n)$ is then given by
$$
	\pi_{\operatorname{max}}:=
	\{\pi_1,\ldots,\pi_k,\kappa_0\}\cup\bigcup\limits_{1\leq i\leq k}\{\kappa_i^1,\ldots,\kappa_i^{\norm{\pi_i}-1}\}.
$$
In general elements $\mu\in\parti_\pi(n)$ are 
noncrossing partitions obtained as the union of $\{\pi_1,\ldots,\pi_k\}$ with noncrossing partitions of $\kappa_0$ and the $\kappa_i^j$. 
This concludes the proof of the Lemma.

 
\subsection{Power maps}
\label{ssec:power}

Consider the `coface maps'
\begin{eqnarray*}
	f_i^n:[n] 	& \longrightarrow 	& [n+1]  \\
	j 		& \longmapsto 		& \begin{cases} j & \text{ for } j\leq i \\
	j+1 		& \text{ for } j> i ,
  \end{cases}
\end{eqnarray*}
where $i=0,\dots, n$.

The $i$-th {\em replication map} (for $i=1,\dots ,n$), denoted $r_i^n$ is the map from $\parti(n)$ to
$\parti(n+1)$ defined by sending a non crossing partition
$\pi=(\pi_1,\ldots,\pi_l)$ to $r_i^n(\pi)=\{\pi'_1,\dots , \pi'_l\}$, where
$\pi'_j:=f^n_i(\pi_j)\cup\{i+1\}$ for $i\in \pi_j$, and $\pi'_j:=f^n_i(\pi_j)$
for $i\notin \pi_j$.  In words, a copy of $i$ is created, labeled $i+1$ and put in the same
block as $i$, and the elements above $i$ are translated by $+1$.

\begin{defi}
  The {\em $p$-th power} of an element $\pi=\{\pi_1,\ldots , \pi_l\}$ of
  $\parti(n)$ is the element of $\parti(pn)$ obtained as
  $\pi^p=(r_1^{pn-1}\circ\cdots \circ r_1^{1+(n-1)p})\circ\cdots\circ
  (r_n^{n+p-2}\circ\cdots\circ r_n^n)(\pi)$.
  In words, each element $i$ is replicated $p$ times and all the replicas are
  put in the same block as $i$, integer labeling being changed in a coherent way.
\end{defi}

For example (with $n=4$ and $p=2$), if $\pi=\{\{1,4\},\{2,3\}\}$ then
$\pi^2=\{\{1,2,7,8\},\{3,4,5,6\}\}$.

One obtains the arithmetic rule 
$$
	(\pi^p)^q=\pi^{pq}.
$$

Let us immediately state an obvious but useful characterization of the
image of the power map: a noncrossing partition $\alpha\in \parti(pn)$ is in the image of
the $p$-th power map if and only if each of the sets $\{1,\ldots, p\}$; $\{p+1,\ldots, 2p\}$;
$\cdots$; $\{pn-p+1,\ldots,pn\}$ is contained in a block of $\alpha$. In more
abstract (but equivalent) terms:

\begin{lemma}
\label{powers}
  A noncrossing partition $\alpha\in\parti(pn)$ is in the image of the
  $p$-th power map if and only if
  $$
	 \alpha=\alpha\vee\{\{1,\ldots, p\}; \{p+1,\ldots, 2p\}; \cdots; \{pn-p+1,\ldots,pn\}\}.
  $$
\end{lemma}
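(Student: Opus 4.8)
The plan is to split the statement into a purely lattice-theoretic translation and the combinatorial description of the image of the power map, the latter being essentially the remark recorded just before the lemma.

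Write $\sigma := \{\{1,\ldots,p\};\{p+1,\ldots,2p\};\cdots;\{pn-p+1,\ldots,pn\}\}$ for the partition of $[pn]$ into consecutive blocks of size $p$. Since $\vee$ is the join in the lattice $(\parti(pn),\divides)$, we have $\alpha=\alpha\vee\sigma$ if and only if $\sigma\divides\alpha$, i.e.\ every block of $\sigma$ is contained in a single block of $\alpha$. Unwinding the definition of $\divides$, this says precisely that for each $i=1,\ldots,n$ the elements $p(i-1)+1,\ldots,pi$ lie in a common block of $\alpha$. Thus the right-hand identity is just a reformulation of the combinatorial condition in the preceding remark, and it remains to confirm that this condition characterizes the image of the $p$-th power map.

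For the forward implication, if $\alpha=\pi^p$ then, by the definition of the power map, each element $i\in[n]$ is replicated into $p$ copies placed in a common block; tracking the relabeling performed by the iterated replication maps, the $i$-th element of $[n]$ ends up occupying exactly the positions $p(i-1)+1,\ldots,pi$ of $[pn]$ (as in the worked example, $1\mapsto\{1,2\}$, $2\mapsto\{3,4\}$, etc.). Hence each consecutive $p$-block of $\alpha$ sits inside one block.

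For the converse, assume each consecutive $p$-block lies in a single block of $\alpha$, and define a candidate preimage $\pi\in\parti(n)$ by collapsing: declare $i$ and $j$ to be in the same block of $\pi$ exactly when the $i$-th and $j$-th consecutive $p$-blocks lie in the same block of $\alpha$; this is well defined by hypothesis. The partition $\pi$ is noncrossing, since a crossing $a<b<c<d$ in $\pi$ would, upon passing to boundary positions of the corresponding $p$-blocks, yield a crossing in $\alpha$. Finally $\pi^p=\alpha$: positions inside a common $p$-block are grouped together in both partitions, while positions in distinct $p$-blocks $i,j$ are identified in $\pi^p$ exactly when $i,j$ share a block of $\pi$, which by construction happens exactly when they are identified in $\alpha$. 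The only delicate point --- and the reason this ``obvious'' characterization deserves a line --- is the bookkeeping of the relabeling, namely the positional correspondence $i\leftrightarrow\{p(i-1)+1,\ldots,pi\}$ induced by the composite of replication maps and the attendant check that the power map carries $\pi$-relations to $\pi^p$-relations across the corresponding blocks; once this is fixed, both the noncrossing transfer and the identity $\pi^p=\alpha$ are routine.
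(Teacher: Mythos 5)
Your proof is correct and supplies exactly what the paper leaves unproved: the paper states this lemma as ``an obvious but useful characterization'' with no argument, treating the join identity and the consecutive-block condition as interchangeable reformulations. Your translation of $\alpha=\alpha\vee\sigma$ into $\sigma\divides\alpha$, followed by the two-way verification (positional bookkeeping of the replication maps for the forward direction; the collapse construction, noncrossing transfer, and the check $\pi^p=\alpha$ for the converse), is precisely the routine argument the paper intends, so your approach agrees with the paper's.
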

We will write $\pi=\sqrt{\pi^2}$ and more generally $\pi=(\pi^p)^{\frac{1}{p}}$. Notice that the $p$-th root operation is not defined for general noncrossing partitions in $\parti(pn)$: by Lemma \ref{powers}, $\alpha^{\frac{1}{p}}$ is defined if and only if $\alpha=\alpha\vee\{\{1,\ldots, p\}; \{p+1,\ldots, 2p\}; \cdots; \{pn-p+1,\ldots,pn\}\}.$

 
\subsection{Concatenation product}
\label{ssec:concat}

The concatenation of two noncrossing partitions, $\alpha \in \parti(n)$ and $\beta\in \parti(m)$, is the noncrossing partition $\alpha \cdot \beta:=\alpha\cup(\beta+n)$. It is an easy exercise to check the next

\begin{lemma}
  $NC := \coprod_{n\in\N} \parti(n)$ with the concatenation product is the 
  free monoid on the set of 
 irreducible noncrossing partitions.
\end{lemma}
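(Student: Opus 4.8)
The plan is to verify the two ingredients that characterise a free monoid: that concatenation makes $NC$ a monoid, and that the multiplication map from the free monoid on the set $X$ of irreducible noncrossing partitions is a bijection onto $NC$. The monoid axioms are routine: associativity of $\alpha\cdot\beta=\alpha\cup(\beta+n)$ follows because for $\alpha\in\parti(n)$, $\beta\in\parti(m)$, $\gamma\in\parti(l)$ both bracketings equal $\alpha\cup(\beta+n)\cup(\gamma+n+m)$, and the unique partition of the empty set $[0]$ is a two-sided unit since $\beta+0=\beta$. So the real content is that every $\pi\in\parti(n)$ factors as a concatenation of irreducibles (existence/surjectivity) and that this factorisation is unique (injectivity, with the empty word corresponding to the unit). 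I would package both through the notion of irreducible component introduced above.

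For existence, the key geometric fact I would establish first is that the maximal blocks of a noncrossing partition have pairwise disjoint spans which tile $[n]$ into consecutive intervals. The noncrossing condition forces any two distinct blocks to have spans $[\inf,\sup]$ that are either disjoint or strictly nested, since overlapping-but-not-nested spans produce a forbidden pattern $a<b<c<d$; as disjointness of blocks rules out shared endpoints, strict span-nesting is exactly the relation $\prec$, so the $\prec$-maximal blocks are precisely those with span-maximal intervals, and these are pairwise disjoint. Every $x\in[n]$ lies in some block, whose span is contained in the span of a maximal block, hence $x$ lies in exactly one maximal span; so the maximal spans cover $[n]$, and disjoint intervals covering $[n]$ tile it as $I_1,\ldots,I_r$ read left to right. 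Consequently the union of the blocks in the irreducible component $P_j$ attached to the $j$-th maximal block equals the interval $I_j$, and applying $\st$ to $P_j$ yields an irreducible partition, its maximal block joining the two endpoints of $I_j$, i.e.\ $1$ and $\norm{I_j}$. Because the $I_j$ are consecutive, concatenating $\st(P_1)\cdots\st(P_r)$ reinstates each component on its original interval and recovers $\pi$.

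For uniqueness I would run the same computation in the opposite direction. Given any factorisation $\pi=\tau_1\cdots\tau_r$ with each $\tau_i\in\parti(n_i)$ irreducible, concatenation places the blocks of $\tau_i$ on the consecutive interval $I_i=[\,n_1+\cdots+n_{i-1}+1,\ n_1+\cdots+n_i\,]$. Blocks coming from different factors have disjoint spans, so there is no cross-factor nesting; within factor $i$ the shift is an increasing bijection and hence preserves $\prec$, so the unique maximal block of $\tau_i$ becomes, after shifting, a $\prec$-maximal block of $\pi$ spanning all of $I_i$. Thus the maximal blocks of $\pi$ are exactly these $r$ blocks, the irreducible components of $\pi$ are exactly the shifted copies of the $\tau_i$, and $\st$ of the $i$-th component returns $\tau_i$. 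This shows $r$ and the $\tau_i$ are determined by $\pi$, so the factorisation is unique and coincides with the component decomposition of the previous paragraph. Hence the multiplication map from $X^{\ast}$ is a bijection and $NC$ is free on $X$.

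The main obstacle is the combinatorial core invoked in the existence step: proving that the maximal blocks of a noncrossing partition have disjoint spans tiling $[n]$, so that the irreducible components live on consecutive intervals. Once this is in place, both existence and uniqueness reduce to the observation that concatenation and the component decomposition are mutually inverse, with the translation invariance of $\prec$ and the standardisation $\st$ doing the bookkeeping; the monoid axioms are immediate.
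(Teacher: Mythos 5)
Your proof is correct, and it takes the route the paper intends: the paper itself gives no argument (the lemma is left as ``an easy exercise''), but it defines the order $\prec$ and irreducible components immediately beforehand precisely so that the factorisation you construct --- maximal blocks having disjoint spans that tile $[n]$, components recovered by $\st$, concatenation and component decomposition mutually inverse --- is the expected one. Your isolation of the span-tiling fact as the combinatorial core, and the two-directional check giving existence and uniqueness of the factorisation into irreducibles, completely fills in that exercise.
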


One also obtains the (noncommutative) arithmetic rule 
$$
	(\alpha\cdot \beta)^p=\alpha^p\cdot \beta^p,
$$
where power maps are defined as in the previous section (and not as powers for the concatenation product).

\begin{remark}
  Let $\mathsf{NC}$ denote the linear span on $NC$. Equipped with the concatenation product, it is a free associative algebra over the set of irreducible
  noncrossing partitions. It can be equipped with a cocommutative Hopf algebra
  structure by letting the irreducible noncrossing partitions be primitive
  elements. The construction is natural and allows one to relate the theory of
  noncrossing partitions to the theory of free Lie algebras in a canonical way
  (see e.g. \cite{Cartier-Patras:2021} for definitions and details on Hopf algebras and free Lie algebras).
\end{remark}


\subsection{Perfect shuffle product}
\label{ssec:perfect}

\begin{defi}\label{perfsh}
 Let $\alpha\in \parti(kn)$ and $\beta\in\parti(ln)$. The $n$-{\em perfect shuffle}  of $\alpha$ and $\beta$ 
\begin{equation}
\label{perfshuffle} 
 	\alpha\ast_n\beta:=i_k(\alpha)\cup e_l(\beta),
\end{equation}
is a partition of $[(k+l)n]$, defined in terms of the monotone embeddings 
  \begin{align*}
  i_k & :
  \left\{\begin{array}{rcl}
  [kn]&\longrightarrow&[(k+l)n]\\
  ak+j&\longmapsto&a(k+l)+j, \quad\quad\ 
  \text{ for } 0\leq a\leq n-1,\ 1\leq j\leq k
  \end{array}\right.
  \\
  e_l&:\left\{\begin{array}{rcl}
  [ln]&\longrightarrow&[(k+l)n]\\
  \,al+j&\longmapsto&a(k+l)+k+j
  \quad \text{ for } 0\leq a\leq n-1,\ 1\leq j\leq l.
  \end{array}\right.
  \end{align*}
Notice that $\alpha\ast_n\beta$ is not a noncrossing partition in general.
\end{defi} 

For example (with $n=3$, $k=2$, and $l=1$), the 3-perfect shuffle of
$\alpha= \{\{1,5,6\},\{2,4\},\{3\}\}$ with $\beta=\{\{1\},\{2,3\}\}$ is $\{\{1,7,8\}
,\{2,5\}, \{4\},\{3\},\{6,9\}\}$ (it is not a noncrossing partition). For example, the integer $5=2*2+1$ in the first block of $\alpha$ is sent to $7=2*3+1$, whereas the integer $2=1*1+1$ in the second block of $\alpha$ is sent to $6=1*3+2+1$.

 In
the following picture, on top and below are indicated the initial values of
the elements, before they are shuffled and relabeled.
\[
\begin{tikzpicture}

\draw (0.0,0) node {$1$};
\draw (0.3,0) node {$2$};
\draw (0.6,0) node {$3$};
\draw (0.9,0) node {$4$};
\draw (1.2,0) node {$5$};
\draw (1.5,0) node {$6$};
\draw (1.8,0) node {$7$};
\draw (2.1,0) node {$8$};
\draw (2.4,0) node {$9$};

\begin{scope}[shift={(0,0.2)}]
\draw (0,0.0) -- (0,0.24) -- (2.1,0.24) -- (2.1,0);
  \draw (1.8,0)--(1.8,0.24);
\draw (0.3,0.0) -- (0.3,0.16) -- (1.2,0.16) -- (1.2,0);
\draw (0.9,0.0) -- (0.9,0.1);
\tiny
\draw (0.0,0.4) node {$1$};
\draw (0.3,0.4) node {$2$};
\draw (0.9,0.4) node {$3$};
\draw (1.2,0.4) node {$4$};
\draw (1.8,0.4) node {$5$};
\draw (2.1,0.4) node {$6$};
\end{scope}

\begin{scope}[shift={(0,-0.2)}]
\draw (1.5,0.0) -- (1.5,-0.14) -- (2.4,-0.14) -- (2.4,0);
\draw (0.6,0.0) -- (0.6,-0.14);
\tiny
\draw (0.6,-0.3) node {$1$};
\draw (1.5,-0.3) node {$2$};
\draw (2.4,-0.3) node {$3$};
\end{scope}

\end{tikzpicture}
\] 

The $n$-perfect shuffle product is easily seen to be associative. Given $\alpha\in\parti(n)$, it also 
satisfies
$$
	\alpha^p=\alpha\ast_n\cdots \ast_n\alpha \vee\{\{1,\ldots, p\}; \{p+1,\ldots, 2p\}; \cdots; \{pn-p+1,\ldots,pn\}\}
$$
where the product $\alpha\ast_n\cdots \ast_n\alpha$ on the right-hand side has $p$ factors.

\begin{defi}
\label{def:admissible}
With the same notation as above, when (and only when) the $n$-perfect shuffle \eqref{perfshuffle} is a noncrossing partition, that is, $i_k(\alpha)\cup e_l(\beta)  \in \parti((k+l)n)$, we say that the pair $(\alpha,\beta)$ is $n$-{\it admissible} and, to notationally distinguish that case, set
  $$
  	\alpha\shuffle_n\beta:=\alpha\ast_n\beta.
  $$
When $(\alpha,\beta)\in\parti(n)^2$, we will slightly abusively say that the 
pair is admissible for ``the pair is $n$-admissible''.
\end{defi}

\begin{defi}
  More generally, given
  $\alpha_1\in\parti(k_1n),\ldots,\alpha_p\in\parti(k_pn)$, we say that the
  $p$-tuple $(\alpha_1,\ldots,\alpha_p)$ is {$n$-\it admissible} if and only if
  $\alpha_1\ast_n\cdots \ast_n\alpha_p$ is noncrossing, in which case we
  also write $\alpha_1\ast_n\cdots \ast_n\alpha_p=:\alpha_1\shuffle_n\cdots
  \shuffle_n\alpha_p.$

When $(\alpha_1,\cdots,\alpha_k)\in\parti(n)^k$, we will slightly abusively say 
that the  $k$-tuple is admissible for ``the $k$-tuple is $n$-admissible''.
\end{defi}

\begin{lemma}
\label{admissibility}
  With the same notation, a $p$-tuple $(\alpha_1,\ldots,\alpha_p)$ is $n$-admissible
  if and only if all pairs $(\alpha_i,\alpha_j)$ with $1\leq i<j\leq p$ are
  $n$-admissible. 
\end{lemma}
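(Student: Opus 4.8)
The plan is to exploit the fact that any crossing in a partition is witnessed by only two blocks, together with the observation that in a perfect shuffle each block is the image of a block of \emph{exactly one} factor. Indeed, since the embeddings $i_k$ and $e_l$ (and their $p$-fold analogues) have pairwise disjoint images, the shuffle merely relabels and interleaves points without ever merging blocks coming from different factors; so every block of $\alpha_1\ast_n\cdots\ast_n\alpha_p$ is the monotone image of a block of a single $\alpha_t$. The argument then rests on one order-theoretic statement: \emph{for $i<j$, the standardisation $\st$ of the restriction of the full shuffle to the points coming from $\alpha_i$ and $\alpha_j$ is exactly $\alpha_i\ast_n\alpha_j$.} To see this, I would unwind Definition~\ref{perfsh}: a point of $\alpha_t$ at source position $a k_t+\ell$ (with $0\le a\le n-1$, $1\le \ell\le k_t$) lands in ``round'' $a$ and, within that round, at offset $k_1+\cdots+k_{t-1}+\ell$. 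Hence the relative order of an $\alpha_i$-point and an $\alpha_j$-point is decided first by round number and, for equal rounds, always in favour of the $\alpha_i$-point because $i<j$ --- which is precisely the interleaving rule defining $\alpha_i\ast_n\alpha_j$. Since $\st$ is an increasing bijection, it preserves and reflects the noncrossing property.

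For the forward implication, I would assume $(\alpha_1,\ldots,\alpha_p)$ is $n$-admissible, so the full shuffle is noncrossing. Passing to the induced subpartition on any set of points preserves noncrossingness, so its restriction to the $\alpha_i$- and $\alpha_j$-points is noncrossing; by the order-preservation statement this restriction standardises to $\alpha_i\ast_n\alpha_j$, which is therefore noncrossing, i.e.\ $(\alpha_i,\alpha_j)$ is $n$-admissible for every $i<j$.

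For the converse, I would assume every pair is $n$-admissible and suppose, for contradiction, that the full shuffle contains a crossing, witnessed by points $a<b<c<d$ with $a,c$ in a block $B$ and $b,d$ in a block $C\ne B$. Let $B$ be the image of a block of $\alpha_i$ and $C$ the image of a block of $\alpha_j$. If $i=j$, then $B$ and $C$ are images of two distinct blocks of the single noncrossing partition $\alpha_i$ under a monotone embedding, so $\alpha_i$ would itself cross --- impossible. If $i\neq j$, say $i<j$, then all four points lie among the $\alpha_i$- and $\alpha_j$-points, and by the order-preservation statement they still satisfy $a<b<c<d$ with $a,c$ and $b,d$ in distinct blocks after restricting and standardising; this exhibits a crossing in $\alpha_i\ast_n\alpha_j$, contradicting the admissibility of $(\alpha_i,\alpha_j)$. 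Hence the full shuffle is noncrossing.

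The only step that requires genuine care is the order-preservation claim, since everything else is a clean two-case dichotomy on the factors to which the crossing blocks belong. The potential subtlety there is purely bookkeeping: one must check that the round-then-factor-then-index ordering induced by the $p$-fold embeddings agrees \emph{exactly} with the two-factor embeddings $i_k,e_l$ when all other factors are deleted. Once the global position $aK+(k_1+\cdots+k_{t-1})+\ell$ (with $K=k_1+\cdots+k_p$) is written out as above, this agreement is immediate, and the lemma follows.
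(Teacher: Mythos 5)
Your proof is correct and follows essentially the same route as the paper's: both arguments rest on the facts that the noncrossing property is a pairwise condition on blocks, that the blocks of $\alpha_1\ast_n\cdots\ast_n\alpha_p$ are in bijection with (monotone images of) the blocks of the individual factors, and that the relative positions of points coming from $\alpha_i$ and $\alpha_j$ in the full shuffle agree with those in $\alpha_i\ast_n\alpha_j$. The only difference is one of detail: you spell out the standardisation bookkeeping (the position formula $aK+(k_1+\cdots+k_{t-1})+\ell$) that the paper treats as immediate "by construction."
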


\begin{proof}
  The property for a partition to be noncrossing depends only on the
  pairwise behavior of its blocks. As, by construction, the blocks of $\alpha_1\ast_n\cdots
  \ast_n\alpha_p$  are all obtained from and in bijection
  with the blocks of the $\alpha_i$ and as furthermore the $\alpha_i$ are noncrossing
  partitions, it is enough to test the noncrossing property considering
  only the relative positions of blocks obtained from a $\alpha_i$ and from a $\alpha_j$
  for $i<j$, that is to test if $\alpha_i\ast_n\alpha_j$ is noncrossing for
  $i<j$.
\end{proof}

We list (without proofs) some elementary properties of $n$-admissibility and of the $n$-perfect shuffle product:

\begin{lemma}
\label{admcond}
  If $(\alpha, \beta)$ in $\parti(kn)\times \parti(ln)$ is $n$-admissible and $\alpha'\divides \alpha$,
  $\beta'\divides \beta$, then $(\alpha',\beta')$ is $n$-admissible. 
  If $(\alpha,\beta)$ is not $n$-admissible (that is,
  $\alpha\ast_n\beta \notin \parti((k+l)n))$ and $\alpha\divides \alpha'$,
  $\beta\divides \beta'$, then $(\alpha',\beta')$ is not $n$-admissible.
\end{lemma}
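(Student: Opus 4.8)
The plan is to reduce everything to the elementary observation that a crossing in a perfect shuffle can only occur \emph{between} the two sides of the shuffle. Recall from Definition~\ref{perfsh} that $\alpha\ast_n\beta = i_k(\alpha)\cup e_l(\beta)$. Partitioning $[(k+l)n]$ into $n$ consecutive windows of length $k+l$, the image of $i_k$ consists of the first $k$ entries of each window and the image of $e_l$ of the last $l$, so the two images are disjoint and together exhaust $[(k+l)n]$. Consequently every block of $\alpha\ast_n\beta$ lies entirely in the image of $i_k$ (coming from a block of $\alpha$) or entirely in the image of $e_l$ (coming from a block of $\beta$); no block straddles the two sides. First I would record the consequence that, because $\alpha$ and $\beta$ are noncrossing and $i_k,e_l$ are monotone, two blocks of $\alpha\ast_n\beta$ issued from the same side never cross. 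Hence any crossing of $\alpha\ast_n\beta$ is necessarily between a block of $i_k(\alpha)$ and a block of $e_l(\beta)$.

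For the first (monotonicity) statement I would suppose $(\alpha,\beta)$ admissible and $\alpha'\divides\alpha$, $\beta'\divides\beta$. Since $i_k$ and $e_l$ are fixed embeddings, each block of $i_k(\alpha')$ sits in a block of $i_k(\alpha)$ and each block of $e_l(\beta')$ in a block of $e_l(\beta)$, so every block of $\alpha'\ast_n\beta'$ lies inside a block of $\alpha\ast_n\beta$. Assuming for contradiction a crossing of $\alpha'\ast_n\beta'$ between distinct blocks $B'$ and $C'$, with $a<b<c<d$, $a,c\in B'$ and $b,d\in C'$: if $B'$ and $C'$ come from the same side, this contradicts that $\alpha'$ (resp.\ $\beta'$) is itself noncrossing; if they come from different sides, then $B'\subseteq B$ and $C'\subseteq C$ for blocks $B,C$ of $\alpha\ast_n\beta$ on opposite sides, whence $B\neq C$ and the same four points exhibit a crossing of $\alpha\ast_n\beta$, contradicting admissibility.

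For the converse I would take $(\alpha,\beta)$ not admissible, so $\alpha\ast_n\beta$ has a crossing, together with $\alpha\divides\alpha'$ and $\beta\divides\beta'$. By the opening remark the crossing is between a block $B$ of $i_k(\alpha)$ and a block $C$ of $e_l(\beta)$, say $a,c\in B$, $b,d\in C$ with $a<b<c<d$. Now $\alpha\divides\alpha'$ yields a block $\widetilde B$ of $i_k(\alpha')$ with $B\subseteq\widetilde B$, and $\beta\divides\beta'$ a block $\widetilde C$ of $e_l(\beta')$ with $C\subseteq\widetilde C$. As $\widetilde B$ and $\widetilde C$ lie on opposite sides they are distinct, so the same four points witness a crossing of $\alpha'\ast_n\beta'$, and $(\alpha',\beta')$ is not admissible.

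The main subtlety I would emphasize lies in the first part: it is \emph{not} true in general that a refinement of a noncrossing partition is noncrossing, since splitting $\{1,2,3,4\}$ into $\{1,3\},\{2,4\}$ creates a crossing. What rescues the argument is exactly the disjointness of the two images: a single block of the shuffle never straddles both sides, so whenever $B'$ and $C'$ fall in a common block of $\alpha\ast_n\beta$ they necessarily lie on the same side, and there the hypothesis that $\alpha'$ and $\beta'$ are themselves noncrossing can be invoked. Everything else is a routine transport of the crossing condition $a<b<c<d$ through the monotone embeddings $i_k$ and $e_l$.
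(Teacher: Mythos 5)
Your proof is correct. Note that the paper itself gives no proof of this lemma: it is listed among ``elementary properties of $n$-admissibility'' stated explicitly \emph{without} proofs, so there is nothing to compare against line by line. Your argument supplies exactly the kind of justification the authors had in mind, and its key observation --- that blocks of $\alpha\ast_n\beta$ never straddle the two images $i_k(\alpha)$ and $e_l(\beta)$, so that same-side crossings are ruled out by the noncrossing hypothesis on the factors and any crossing must be a mixed one, which transports up or down along block inclusions --- is the same mechanism the paper does spell out in its proof of Lemma~\ref{admissibility} (``the blocks of $\alpha_1\ast_n\cdots\ast_n\alpha_p$ are all obtained from and in bijection with the blocks of the $\alpha_i$''). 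Your flagging of the subtlety in the first direction is also well taken: refining a noncrossing partition can in general create crossings, and it is precisely the disjointness of the two sides, together with the assumption that $\alpha'$ and $\beta'$ are themselves noncrossing, that closes this gap. Both directions are handled completely and the case analysis (same side versus opposite sides) is exhaustive, so the write-up stands as a valid proof of the lemma.
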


\begin{lemma}
\label{increa}
  The $n$-perfect shuffle product is increasing: if $(\alpha, \beta)$,
  $(\alpha',\beta')$ in $\parti(kn)\times \parti(ln)$ are $n$-admissible with $\alpha\divides \alpha'$ and
  $\beta\divides \beta'$, then
  $$
  \alpha{\shuffle_n}\beta \divides \alpha'{\shuffle_n}\beta'.
  $$
\end{lemma}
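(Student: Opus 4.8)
The plan is to reduce the claimed inequality to an entirely set-theoretic statement about block containment, exploiting the fact that the perfect shuffle manipulates blocks through relabellings that keep the two factors separate. Recall from Definition~\ref{perfsh} that $\alpha\shuffle_n\beta = i_k(\alpha)\cup e_l(\beta)$, where the image sets $i_k([kn])$ and $e_l([ln])$ are disjoint and together exhaust $[(k+l)n]$. Consequently the blocks of $\alpha\shuffle_n\beta$ are exactly the sets $i_k(B)$ for $B$ a block of $\alpha$, together with the sets $e_l(C)$ for $C$ a block of $\beta$; no block of the shuffle combines $\alpha$-labels with $\beta$-labels.

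First I would note that $i_k$ and $e_l$, being monotone, are injective, so $i_k(\alpha')$ and $e_l(\beta')$ are honest set partitions whose blocks are precisely the images of the blocks of $\alpha'$ and of $\beta'$. Since $\divides$ on $\parti((k+l)n)$ is by definition block containment, it then suffices to verify that every block of $\alpha\shuffle_n\beta$ lies inside some block of $\alpha'\shuffle_n\beta'$. Take such a block. If it is of the form $i_k(B)$ with $B$ a block of $\alpha$, then $\alpha\divides\alpha'$ supplies a block $B'$ of $\alpha'$ with $B\subseteq B'$, whence $i_k(B)\subseteq i_k(B')$, and $i_k(B')$ is a block of $\alpha'\shuffle_n\beta'$. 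The case of a block $e_l(C)$ is handled identically, using $\beta\divides\beta'$ and the embedding $e_l$. This exhausts all blocks and yields $\alpha\shuffle_n\beta \divides \alpha'\shuffle_n\beta'$.

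The admissibility hypotheses play only a supporting role: they guarantee that both sides are genuine noncrossing partitions, so that the notation $\shuffle_n$ and the lattice order $\divides$ are meaningful; indeed, by Lemma~\ref{admcond} the admissibility of $(\alpha',\beta')$ together with $\alpha\divides\alpha'$, $\beta\divides\beta'$ already forces that of $(\alpha,\beta)$. There is no genuine obstacle here, since the containment argument never invokes the noncrossing property and is insensitive to the interleaving pattern, the embeddings $i_k$ and $e_l$ acting separately on the two factors. The only point demanding minimal care is the bookkeeping remark that the blocks of a shuffle are exactly the injective images of the blocks of its two inputs, with the two label sets kept disjoint.
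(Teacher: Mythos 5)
Your proof is correct, and there is in fact no proof in the paper to compare it against: Lemma~\ref{increa} appears there in a list of properties explicitly stated ``without proofs'' as elementary. Your block-containment argument is precisely the intended verification --- the embeddings $i_k$ and $e_l$ are fixed maps depending only on $k$, $l$, $n$, so each block $i_k(B)$ (resp.\ $e_l(C)$) of $\alpha\shuffle_n\beta$ is contained in the block $i_k(B')$ (resp.\ $e_l(C')$) of $\alpha'\shuffle_n\beta'$ supplied by $\alpha\divides\alpha'$ (resp.\ $\beta\divides\beta'$); and, as you correctly observe, the admissibility hypotheses only ensure that both sides lie in $\parti((k+l)n)$, with admissibility of $(\alpha,\beta)$ already following from that of $(\alpha',\beta')$ by Lemma~\ref{admcond}.
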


\begin{lemma}\label{admis-is-lattice}
  Given $\alpha\in \parti(n)$, the set of noncrossing partitions $\beta\in \parti(n)$
  such that $(\alpha,\beta)$ is admissible is ordered by coarsening. It is
  stable by meets and joins and forms a sublattice of the lattice of
  noncrossing partitions in $\parti(n)$.
\end{lemma}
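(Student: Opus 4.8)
The plan is to identify the set $S:=\{\beta\in\parti(n)\mid (\alpha,\beta)\text{ is admissible}\}$ with a principal down-set (interval) of $\parti(n)$, after which the sublattice property is automatic. Throughout we are in the case $k=l=1$ of Definition~\ref{perfsh}, so $(\alpha,\beta)$ is admissible precisely when $i_1(\alpha)\cup e_1(\beta)$ is a noncrossing partition of $[2n]$, with $\alpha$ placed on the odd positions ($x\mapsto 2x-1$ under $i_1$) and $\beta$ on the even positions ($y\mapsto 2y$ under $e_1$).

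First I would apply Lemma~\ref{relatlatt}, taking as fixed disjoint blocks $\pi_1,\dots,\pi_k$ the blocks of $i_1(\alpha)$ inside $[2n]$. Since these occupy exactly the $n$ odd positions of $[2n]$, the free positions $\mu_0$ and $\mu_i^j$ of that lemma consist entirely of even positions, and together they partition $\{2,4,\dots,2n\}$. Let $m\in\parti(n)$ be the standardization of the partition of the even positions into these regions; concretely, $y,y'\in[n]$ lie in the same block of $m$ iff $2y,2y'$ lie in the same region $\mu_0$ or $\mu_i^j$.

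The key step is to show $S=\{\beta\in\parti(n)\mid \beta\divides m\}$. A partition $\beta$ lies in $S$ iff $i_1(\alpha)\cup e_1(\beta)$ is noncrossing; as this partition always contains the blocks of $i_1(\alpha)$, that is equivalent to $i_1(\alpha)\cup e_1(\beta)\in\parti_{i_1(\alpha)}(2n)$. By the product decomposition of Lemma~\ref{relatlatt}, an element of $\parti_{i_1(\alpha)}(2n)$ is exactly a choice of noncrossing partition on each region, so its blocks other than the $\pi_i$ are subsets confined to a single region. The non-$\alpha$ blocks of $i_1(\alpha)\cup e_1(\beta)$ are precisely the images under $e_1$ of the blocks of $\beta$, so membership in $\parti_{i_1(\alpha)}(2n)$ amounts to requiring each block of $\beta$ to be contained in a single region, i.e.~in a single block of $m$; this is the condition $\beta\divides m$. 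Conversely, when each block of $\beta$ lies in one region, the restriction of the noncrossing partition $\beta$ to each region is noncrossing, so $i_1(\alpha)\cup e_1(\beta)$ reassembles to a genuine element of $\parti_{i_1(\alpha)}(2n)$; in particular $m$ itself is admissible, being the image of the maximal element $\pi_{\operatorname{sup}}$.

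With $S=[0_n,m]$ established, the conclusion is immediate: for $\beta_1,\beta_2\in S$ we have $\beta_1\wedge\beta_2\divides\beta_1\divides m$ and $\beta_1\vee\beta_2\divides m$ (the join being the least upper bound and $m$ an upper bound of both), so $S$ is closed under the meet and join of $\parti(n)$ and is therefore a sublattice. The element $m$ is nothing but the (top) Kreweras complement of $\alpha$. The only real obstacle is the join direction, since coarsening $\beta$ could a priori merge blocks across the $\alpha$-structure and create a crossing; what defeats this is exactly the observation, via Lemma~\ref{relatlatt}, that $S$ is not merely a down-set (which already follows from Lemma~\ref{admcond}) but a principal one, cut out by the single maximal admissible partition $m$.
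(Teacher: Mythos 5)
Your proof is correct and takes essentially the same route as the paper: both hinge on applying Lemma~\ref{relatlatt} to the blocks of $\alpha$ placed on the odd positions of $[2n]$, so that the admissible shuffles $\alpha\shuffle_n\beta$ are identified with the sublattice $\parti_{\tilde\alpha}(2n)$. Your extra step of recognizing the admissible set as the principal interval $[0_n,m]$, with $m$ the standardization of the image of $\pi_{\operatorname{sup}}$ (i.e.\ the Kreweras complement of $\alpha$), merely makes explicit what the paper's ``the result follows'' leaves implicit, and what it records separately in Subsection~\ref{ssec:Kc}.
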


\begin{proof}
  Apply Lemma~\ref{relatlatt} to the case where $\pi_1,\ldots,\pi_k$ is the
  set of blocks in the image $\tilde\alpha$ of $\alpha$ when $\alpha$ is embedded
  into $\parti(2n)$ as a partition of the set of odd elements.  The result
  follows. Notice that with these
  conventions, the lattice $\parti_\pi(2n)$ is the set of all
  $\alpha\shuffle_n\beta$, where $(\alpha,\beta)$ is admissible.
\end{proof}
 
\begin{defi} An element $\alpha$ of $\parti(kn)$ is said to be {\em $k$-preserving}  when it is
  the case that any two integers in $[kn]$ in the same block of $\alpha$ are
  equal modulo $k$, cf.~Arizmendi--Vargas~\cite{Arizmendi-Vargas:2012}.
  The set of $k$-preserving partitions in $\parti(kn)$ is written $\partipres{k}(n)$.\end{defi}

The following obvious Lemma allows to restate the definition in terms of perfect shuffles:
\begin{lemma}
 An element $\alpha$ of $\parti(kn)$ is {$k$-preserving} if and only
  if it can be written $\alpha_1\shuffle_n\cdots \shuffle_n\alpha_k$ with the
  $\alpha_i$ in $\parti(n)$. 
\end{lemma}

 
\subsection{The partial monoid structure}
\label{ssec:partialmonoid}

We have seen that the perfect shuffle \eqref{perfshuffle} of two noncrossing
partitions is not always noncrossing. This creates some difficulties to provide
a synthetic picture allowing to deal simultaneously with noncrossing partitions
as if they were at the same time the elements of a poset and of a monoid --- as
occurs with the divisibility poset and the multiplicative monoid of the
integers, see Subsection \ref{ssec:standardcons}.

\begin{defi}
\label{compp}
  The {\em composition product} on noncrossing partitions is the partially defined product defined for $\alpha,\beta\in \parti(n)$ such that
  $\alpha\ast_n\beta$ is noncrossing by
  \begin{equation}
  \label{compoprod}
  	\alpha\circ \beta:=\sqrt{(\alpha\shuffle_n\beta)\vee \{\{1,2\},\{3,4\},\ldots,\{2n-1,2n\}\}}.
  \end{equation}
\end{defi}

Notice that this product is well defined as a consequence of Lemma~\ref{powers}. Recalling Lemma \ref{increa}, the next Lemma shows that the composition product interacts well with the order structure:

\begin{lemma}\label{monoto}
  Given admissible pairs $(\alpha,\beta)$, $(\alpha',\beta)$,
  $(\alpha,\beta')$ with $\alpha\divides \alpha'$ and $\beta\divides
  \beta'$, we have
$$
	\alpha{\circ}\beta \divides  \alpha'{\circ}\beta \quad\text{ and } \quad \alpha{\circ}\beta \divides  \alpha{\circ}\beta'.
$$
Moreover, these inequalities are strict if $\alpha\neq \alpha'$, respectively $\beta\neq\beta'$.
\end{lemma}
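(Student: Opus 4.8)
The plan is to read the composition product of Definition~\ref{compp} as a composite of three order-preserving operations and to establish monotonicity factor by factor. Write $E := \{\{1,2\},\{3,4\},\ldots,\{2n-1,2n\}\}$ for the fixed ``pairing'' partition appearing in \eqref{compoprod}, so that $\alpha\circ\beta = \sqrt{(\alpha\shuffle_n\beta)\vee E}$. The three operations are: the perfect shuffle $\beta\mapsto\alpha\shuffle_n\beta$ (respectively $\alpha\mapsto\alpha\shuffle_n\beta$); the join $(-)\vee E$ with the fixed element $E$; and the square-root, i.e.\ the partial inverse of the squaring map $\pi\mapsto\pi^2$. I would treat the two asserted inequalities symmetrically, carrying out the argument for $\alpha\divides\alpha'$ with $\beta$ fixed and noting that the case $\beta\divides\beta'$ with $\alpha$ fixed is identical with the roles of odd and even positions in the shuffle interchanged.

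For monotonicity, fix $\beta$ and suppose $\alpha\divides\alpha'$ with $(\alpha,\beta)$, $(\alpha',\beta)$ admissible. First, Lemma~\ref{increa} (applied to $\alpha\divides\alpha'$ and $\beta\divides\beta$) gives $\alpha\shuffle_n\beta \divides \alpha'\shuffle_n\beta$. Next, joining with the fixed element $E$ is monotone in any lattice, so $(\alpha\shuffle_n\beta)\vee E \divides (\alpha'\shuffle_n\beta)\vee E$. Both sides are joins with $E$, hence fixed by $(-)\vee E$, and therefore lie in the image of the squaring map by Lemma~\ref{powers} (with $p=2$); this is precisely the observation guaranteeing that the square roots in \eqref{compoprod} exist. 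It then remains to check that the square-root operation is monotone, which I address next; granting this and applying it to the last divisibility yields $\alpha\circ\beta \divides \alpha'\circ\beta$, as desired.

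The monotonicity (and injectivity) of the square root follows from the fact that $\pi\mapsto\pi^2$ is an order embedding of $\parti(n)$ into $\parti(2n)$. Concretely, $\pi^2$ replicates each $i\in[n]$ as the pair $\{2i-1,2i\}$ placed in a common block, and two elements $i,j$ lie in the same block of $\pi$ precisely when $2i-1$ and $2j-1$ lie in the same block of $\pi^2$. Hence $\pi\divides\mu \iff \pi^2\divides\mu^2$, so squaring both preserves and reflects the order and is injective; its partial inverse, the square root, is therefore monotone and injective on its domain (the image of squaring, identified in Lemma~\ref{powers}). This completes the two non-strict inequalities.

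For the strict statements, assume $\alpha\divides\alpha'$ with $\alpha\neq\alpha'$ and suppose, for contradiction, that $\alpha\circ\beta = \alpha'\circ\beta$. Since the square root is injective, this forces $(\alpha\shuffle_n\beta)\vee E = (\alpha'\shuffle_n\beta)\vee E$. Because $\alpha\neq\alpha'$ and $\alpha\divides\alpha'$, there are elements $x<y$ of $[n]$ lying in the same block of $\alpha'$ but in distinct blocks of $\alpha$; in $\alpha'\shuffle_n\beta$ the odd positions $2x-1$ and $2y-1$ are then joined, so equality of the two joins forces $2x-1$ and $2y-1$ into a common block of $(\alpha\shuffle_n\beta)\vee E$ as well. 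The heart of the argument --- and the step I expect to be the main obstacle --- is to show that admissibility of $(\alpha',\beta)$ makes this impossible. The plan here is to first check that, for an admissible pair, the noncrossing join with $E$ coincides with the set-theoretic join, so that membership in a common block is witnessed by an honest path of $\alpha$-, $\beta$- and $E$-edges in $[2n]$; since $x,y$ sit in different $\alpha$-blocks, such a path must traverse at least one $\beta$-edge, i.e.\ an even arc $\{2u,2v\}$ bridging the two $\alpha$-blocks. One then argues that the outermost such bridging $\beta$-arc, together with the odd $\alpha'$-arc $\{2x-1,2y-1\}$ joining $x$ and $y$, realizes a crossing $a<b<c<d$ in $\alpha'\shuffle_n\beta$, contradicting the noncrossingness of $\alpha'\shuffle_n\beta$. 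Hence $2x-1$ and $2y-1$ are separated in $(\alpha\shuffle_n\beta)\vee E$, the two joins differ, and $\alpha\circ\beta\neq\alpha'\circ\beta$; combined with the non-strict inequality already proved, this yields the strict divisibility. The delicate point to pin down rigorously is the geometric crossing claim for a multi-step path, for which I would reduce to the outermost bridging arc and use the noncrossingness of $\beta$ and of $\alpha\shuffle_n\beta$ to control the intermediate vertices.
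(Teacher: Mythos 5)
The weak-monotonicity half of your proposal is correct and complete: writing $\alpha\circ\beta$ as the composite of the shuffle (monotone by Lemma~\ref{increa}), the join with the fixed pairing $E=\{\{1,2\},\ldots,\{2n-1,2n\}\}$ (monotone in any lattice), and the square root (monotone and injective because squaring is an order embedding onto the image described by Lemma~\ref{powers}) fleshes out what the paper dismisses as ``clearly weakly increasing''. Your reduction of strictness, via injectivity of the root, to showing $(\alpha\shuffle_n\beta)\vee E\neq(\alpha'\shuffle_n\beta)\vee E$, and the choice of $x<y$ lying in one block of $\alpha'$ but in two blocks of $\alpha$, are also fine. The gap is exactly the step you flag as the ``main obstacle'', and it is not a routine verification: the existence of a $\beta$-arc crossing $\{2x-1,2y-1\}$ is essentially the entire content of the lemma, and your path argument does not produce it.

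Concretely, the path joining $2x-1$ to $2y-1$ in $(\alpha\shuffle_n\beta)\cup E$ must indeed use a $\beta$-edge, but nothing forces any $\beta$-edge on it to have one endpoint inside $(2x-1,2y-1)$ and one outside: the crossings of the boundary of the doubled interval $\{2x-1,\ldots,2y-2\}$ can a priori be made by $\alpha$-arcs joining $[x,y-1]$ to its complement while staying inside the $\alpha'$-block of $x$ and $y$ (such arcs contradict neither the noncrossingness of $\alpha$ nor that of $\alpha'$, since they sit inside a single $\alpha'$-block: think of $\alpha\ni\{x,w\}$ with $w>y$, followed by a $\beta$-step from $w$). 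These escape configurations are in fact impossible, but what rules them out is the admissibility of $(\alpha,\beta)$ --- for instance, a $\beta$-block containing $w$ and any element of $[x,y]$ crosses, in $\alpha\shuffle_n\beta$, the odd block coming from the $\alpha$-block of $x$ --- and turning this into a proof needs an induction along a path of arbitrary length, or a cleverer choice of the pair $(x,y)$, neither of which your proposal sets up; deriving the contradiction from the admissibility of $(\alpha',\beta)$ alone, as you plan, cannot succeed. For comparison, the paper avoids paths entirely: it reduces to the case where one partition is obtained from the other by merging exactly two blocks, and then exhibits an explicit separating pair --- in its notation $2\sup(\beta_i)$ and $2\sup(\beta_{i+1})$ --- which lie in different blocks of $(\alpha\shuffle_n\beta)\vee E$ because admissibility forces the subinterval between the merged blocks to be a union of blocks of the shuffle (hence of the join, being also a union of $E$-pairs), yet lie in the same block after the merge; together with weak monotonicity this gives strictness by block counting. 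Adopting that single-merge reduction, or alternatively proving the block-count identity $\norm{\alpha}+\norm{\beta}=n+\norm{\alpha\circ\beta}$ for admissible pairs (from which strictness is immediate), would be the most direct ways to close your gap.
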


The Lemma can be seen as a restatement, in algebraic language, of standard 
monotonicity properties of Kreweras complementation which can be found for example in \cite{Nica-Speicher:Lectures}. However, as it is interesting to
see how they translate into our framework, we sketch the proof:

\begin{proof}
  Since the composition product \eqref{compoprod} is clearly weakly increasing, it is enough to assume that $\alpha\neq \alpha'$ (respectively $\beta\neq\beta'$) and show that the
  number of blocks in $\alpha'\circ\beta$ and $\alpha\circ\beta'$ is
  strictly less than the number of blocks in $\alpha\circ\beta$. It is then
  also enough to study the particular case where $\alpha'$ has one block
  less than $\alpha$, or similarly for $\beta'$ and $\beta$.

  There are several possible configurations. Let us assume for example that
  $\beta'$ is obtained from $\beta$ by the merge of two blocks $\beta_i$
  and $\beta_{i+1}$ that are not comparable (for the coarsening ordering of
  blocks inside $\beta$) and that $\min(\beta_i)<\min(\beta_{i+1})$ (that
  is, the block indexed by $i$ is to the left of the block indexed by $i+1$). Such a configuration implies that the subinterval
  $[2\cdot \max(\beta_i)+1,2\cdot\max(\beta_{i+1})]$ of $[2n]$ is an union
  of blocks in $\alpha\shuffle_n\beta$ (otherwise one can show that
  $(\alpha,\beta')$ would not be admissible as the merge of $\beta_i$ and
  $\beta_{i+1}$ would create a crossing when moving from
  $\alpha\ast_n\beta$ to $\alpha\ast_n\beta'$). As $2\cdot \max(\beta_i)+1$
  is odd and $2\cdot\max(\beta_{i+1})$ is even, this in turn implies that
  $2\cdot \max(\beta_{i+1})$ does not belong to the same block as $2\cdot
  \max(\beta_i)$ in $(\alpha\shuffle_n\beta)\vee
  \{\{1,2\},\{3,4\},\ldots,\{2n-1,2n\}\}$. However,  as $\beta'$ is obtained from $\beta$ by merging $\beta_i$
  and $\beta_{i+1}$, they belong to
  the same block in $(\alpha\shuffle_n\beta')\vee
  \{\{1,2\},\{3,4\},\ldots,\{2n-1,2n\}\}$, so that the latter has at least
  one block less than the former, which concludes the proof of this case.
  The other cases can be obtained similarly.
\end{proof}

\begin{defi}
  A {\em partial monoid} (in the sense of Segal~\cite{Segal:1973}) is a set
  $M$ equipped with a partially-defined binary operation $M\times M \to M$
  required to be associative and unital. More precisely, one is given a
  subset $M_2 \subset M \times M$ and a function $M_2 \to M$ written with
  infix notation $(m_1,m_2) \mapsto m_1 \cdot m_2$ with the property that
  $(m_1 \cdot m_2) \cdot m_3$ is defined if and only if $m_1 \cdot
  (m_2\cdot m_3)$ is defined, and, in that case, the two expressions are
  equal. Finally there should be a neutral element $1$ such that $1\cdot m$
  and $m\cdot 1$ are defined and equal to $m$, for all $m\in M$.
\end{defi}

\begin{prop}\label{ispartial}
  The set $\parti(n)$ equipped with the partially-defined binary operation
  \eqref{compoprod} 
  from the set of admissible pairs to $\parti(n)$ is a partial
  monoid. Its unit is the noncrossing partition $0_n$.
\end{prop}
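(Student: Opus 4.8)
The plan is to reduce the entire statement to the lattice join $\vee$, by first establishing the identity
\[
  \alpha\circ\beta=\alpha\vee\beta \qquad\text{whenever }(\alpha,\beta)\text{ is admissible.}
\]
To prove this I would observe that the squaring map $\nu\mapsto\nu^2$ is an order isomorphism of $\parti(n)$ onto the interval $[P_n,1_{2n}]$ of $\parti(2n)$, where $P_n:=\{\{1,2\},\ldots,\{2n-1,2n\}\}$: it is visibly order-preserving and order-reflecting, and by Lemma~\ref{powers} its image consists exactly of the partitions coarser than $P_n$. Writing $\alpha\circ\beta=\mu$ with $\mu^2=(\alpha\shuffle_n\beta)\vee P_n$, I would check $\mu\divides\alpha\vee\beta$ by noting that every odd block of $\alpha$ and every even block of $\beta$ sits inside a block of $(\alpha\vee\beta)^2$, so $\alpha\shuffle_n\beta\divides(\alpha\vee\beta)^2$ and hence $\mu^2\divides(\alpha\vee\beta)^2$; and $\alpha\vee\beta\divides\mu$ by monotonicity (Lemma~\ref{increa}), since $\alpha^2=(\alpha\shuffle_n 0_n)\vee P_n\divides\mu^2$ and likewise $\beta^2\divides\mu^2$. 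Working throughout with $\divides$ inside the noncrossing lattice avoids any comparison between noncrossing and set-theoretic joins.

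Granting this identity, the unit axiom is immediate: the shuffles $0_n\shuffle_n\alpha$ and $\alpha\shuffle_n 0_n$ merely interleave $\alpha$ with singletons, hence are noncrossing, so $(0_n,\alpha)$ and $(\alpha,0_n)$ are admissible for every $\alpha$, and $0_n\circ\alpha=0_n\vee\alpha=\alpha=\alpha\vee 0_n=\alpha\circ 0_n$. The agreement of values in the associativity axiom is then also free: whenever both sides are defined, $(\alpha\circ\beta)\circ\gamma=(\alpha\vee\beta)\vee\gamma=\alpha\vee(\beta\vee\gamma)=\alpha\circ(\beta\circ\gamma)$.

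The real content is the matching of the two domains, and here I would show that each bracketing is defined if and only if the triple $(\alpha,\beta,\gamma)$ is $n$-admissible, i.e.\ (by Lemma~\ref{admissibility}) all three pairs $(\alpha,\beta)$, $(\alpha,\gamma)$, $(\beta,\gamma)$ are admissible. The left bracketing is defined iff $(\alpha,\beta)$ is admissible and $(\alpha\vee\beta,\gamma)$ is admissible. Since $\alpha,\beta\divides\alpha\vee\beta$, Lemma~\ref{admcond} yields $(\alpha\vee\beta,\gamma)\text{ adm.}\Rightarrow(\alpha,\gamma),(\beta,\gamma)\text{ adm.}$; for the converse I invoke the sublattice property in its \emph{first} variable, namely that $\{\delta:(\delta,\gamma)\text{ admissible}\}$ is a sublattice of $\parti(n)$, so it is join-closed and $(\alpha,\gamma),(\beta,\gamma)\text{ adm.}\Rightarrow(\alpha\vee\beta,\gamma)\text{ adm.}$ The right bracketing is treated identically, now using the sublattice lemma of Subsection~\ref{ssec:perfect} in its stated (second-variable) form to get $(\alpha,\beta\vee\gamma)\text{ adm.}\iff(\alpha,\beta),(\alpha,\gamma)\text{ adm.}$ Both conditions thus collapse to admissibility of all three pairs, and the two bracketings are defined together.

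The main obstacle is exactly this closure of admissibility under joins in the first variable. The lemma of Subsection~\ref{ssec:perfect} records it only in the second variable, and since admissibility is genuinely asymmetric (as the non-symmetric shuffle embeddings show) one cannot simply transpose it. The point to verify is that Lemma~\ref{relatlatt} is indifferent to whether the fixed family of blocks is placed on the odd or on the even positions of $[2n]$: applying it with the blocks of $\gamma$ on the even positions produces the required left-hand sublattice, just as applying it with the blocks of $\alpha$ on the odd positions produces the one already recorded.
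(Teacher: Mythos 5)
Your proof is correct, and it takes a genuinely different route from the paper's. Its core is the identity $\alpha\circ\beta=\alpha\vee\beta$ for admissible pairs, which is true and which you establish soundly: by Lemma~\ref{powers} the squaring map is an order isomorphism of $\parti(n)$ onto the interval above $\{\{1,2\},\ldots,\{2n-1,2n\}\}$, and your two inequalities follow from Lemma~\ref{increa} and the universal property of the join. The paper never isolates this identity; instead it computes in $\parti(3n)$, using Lemma~\ref{interids} and associativity of joins to identify each bracketing with the symmetric expression $\left((\alpha\ast_n\beta\ast_n\gamma)\vee\{\{1,2,3\},\ldots,\{3n-2,3n-1,3n\}\}\right)^{\frac{1}{3}}$, while its domain-matching argument coincides with the easy direction of yours (Lemmas~\ref{admissibility} and~\ref{admcond}). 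What your route buys is conceptual economy: the partial monoid is revealed as nothing but the noncrossing join restricted to admissible pairs, so unitality and equality of the two bracketings are free, and everything stays in $\parti(n)$ and $\parti(2n)$. What the paper's route buys is the $k$-ary formula of Lemma~\ref{translationlemma}, needed later (Proposition~\ref{canobij} and the decalage comparison), which does not follow from the binary join identity alone; it also avoids your one extra ingredient, join-closure of admissibility in the \emph{first} variable. On that ingredient you are right to flag the asymmetry, but your justification needs one more line: Lemma~\ref{relatlatt} applied to the blocks of $\gamma$ on the even positions gives that $\{\delta\shuffle_n\gamma : (\delta,\gamma)\ \text{admissible}\}$ is a sublattice of $\parti(2n)$, and a priori its joins are joins in $\parti(2n)$, not joins of the $\delta$'s in $\parti(n)$. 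The gap closes immediately: if $(\alpha,\gamma)$ and $(\beta,\gamma)$ are admissible, then $(\alpha\shuffle_n\gamma)\vee(\beta\shuffle_n\gamma)=\delta\shuffle_n\gamma$ for some admissible pair $(\delta,\gamma)$ with $\alpha,\beta\divides\delta$, hence $\alpha\vee\beta\divides\delta$, and Lemma~\ref{admcond} (downward closure) then yields admissibility of $(\alpha\vee\beta,\gamma)$. With that line added, your argument is complete, and at the same level of rigor as the paper's own (second-variable) sublattice lemma, whose proof glosses over the identical point.
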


For the proof we shall use the following lemma. Its proof is omitted as it follows easily from the definitions; it illustrates some of the power of noncrossing arithmetic techniques:

\begin{lemma}\label{interids}
Let $\alpha,\beta\in \parti(n)$ and assume that they form an admissible pair. Then $\alpha^2\ast_n\beta$ and $\alpha\ast_n\beta^2$ are also noncrossing and the following identities hold:
\begin{align*}
	\alpha\circ\beta
	&=\sqrt{(\alpha\shuffle_n\beta)\vee \{\{1,2\},\{3,4\},\ldots,\{2n-1,2n\}\}}\\
	&=\left({(\alpha^2\shuffle_n\beta)\vee \{\{1,2,3\},\{4,5,6\},\ldots,\{3n-2,3n-1,3n\}\}}\right)^{\frac{1}{3}}\\
	&=\left({(\alpha^2\shuffle_n\beta)\vee \{\{2,3\},\{5,6\},\ldots,\{3n-1,3n\}\}}\right)^{\frac{1}{3}}\\
	&=\left({(\alpha\shuffle_n\beta^2)\vee \{\{1,2,3\},\{4,5,6\},\ldots,\{3n-2,3n-1,3n\}\}}\right)^{\frac{1}{3}}.
\end{align*}
\end{lemma}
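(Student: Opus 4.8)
The plan is to prove the two parts of the statement in turn — first the admissibility claims, that $\alpha^2\ast_n\beta$ and $\alpha\ast_n\beta^2$ are noncrossing, and then the three alternative formulas for $\alpha\circ\beta$ — guided throughout by one observation. Replicating a factor only inserts, inside each relevant block of the shuffle, a pair of \emph{adjacent} copies of a single element; in $\alpha^2\shuffle_n\beta\in\parti(3n)$ the positions $3m-2,3m-1$ are the two copies of the $m$-th element of $\alpha$ (hence always in a common block), while $3m$ carries the $m$-th element of $\beta$, and dually for $\alpha\shuffle_n\beta^2$. Contracting such an adjacent same-block pair recovers the degree-$2$ picture, and the heart of the proof is that this contraction is compatible with both the noncrossing property and the lattice join.

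For the admissibility claims I would argue by contraposition with the elementary crossing criterion. Let $c\colon[3n]\to[2n]$ be the weakly monotone surjection merging $3m-2$ and $3m-1$ to $2m-1$ and sending $3m$ to $2m$; this is exactly the map turning $\alpha^2\ast_n\beta$ into $\alpha\ast_n\beta$. A crossing $p_1<q_1<p_2<q_2$ in $\alpha^2\ast_n\beta$ has its four points distributed in two distinct blocks, so no two of them can be copies of the same element of $\alpha$; hence $c$ keeps them strictly ordered and in distinct blocks, producing a crossing in $\alpha\ast_n\beta$. As $\alpha\ast_n\beta$ is noncrossing by admissibility, $\alpha^2\ast_n\beta$ must be noncrossing too, and the argument for $\alpha\ast_n\beta^2$ is identical after contracting the two copies of each element of $\beta$.

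For the second formula I would upgrade this contraction to a lattice isomorphism. Let $\sigma\in\parti(3n)$ be the noncrossing partition whose non-singleton blocks are the pairs $\{1,2\},\{4,5\},\ldots,\{3n-2,3n-1\}$. The interval $[\sigma,1_{3n}]$ — the noncrossing partitions in which each such pair lies inside a block — is a sublattice, and contracting these pairs identifies it with $\parti(2n)$; this is the adjacent-contraction instance of Lemma~\ref{relatlatt}. Under this isomorphism $\alpha^2\shuffle_n\beta\mapsto\alpha\shuffle_n\beta$ and the consecutive-triples partition $\{\{1,2,3\},\{4,5,6\},\ldots,\{3n-2,3n-1,3n\}\}$ maps to the consecutive-pairs partition $\{\{1,2\},\{3,4\},\ldots,\{2n-1,2n\}\}$, so the join $P_3:=(\alpha^2\shuffle_n\beta)\vee\{\{1,2,3\},\ldots,\{3n-2,3n-1,3n\}\}$ maps to $P_2:=(\alpha\shuffle_n\beta)\vee\{\{1,2\},\ldots,\{2n-1,2n\}\}$. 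Since a lattice isomorphism preserves joins and sends the $m$-th triple to the $m$-th pair, and since both $P_3$ and $P_2$ are genuine powers by Lemma~\ref{powers} (so the roots are defined), the partition of $[n]$ obtained by collapsing triples in $P_3$ coincides with the one obtained by collapsing pairs in $P_2$; that is, $(P_3)^{1/3}=\sqrt{P_2}=\alpha\circ\beta$.

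The third formula then reduces to the second with no extra work: in $\alpha^2\shuffle_n\beta$ the copies $3m-2$ and $3m-1$ already share a block, so adjoining only the pairs $\{\{2,3\},\{5,6\},\ldots,\{3n-1,3n\}\}$ forces $3m-2,3m-1,3m$ into a common block just as the full triples do, whence $(\alpha^2\shuffle_n\beta)\vee\{\{2,3\},\ldots,\{3n-1,3n\}\}=P_3$ and the cube roots agree. Finally, the fourth formula is the mirror image of the second: the adjacent same-block copies are now the two copies $3m-1,3m$ of each element of $\beta$ in $\alpha\shuffle_n\beta^2$, and contracting them identifies the corresponding interval of $\parti(3n)$ with $\parti(2n)$, sending $\alpha\shuffle_n\beta^2\mapsto\alpha\shuffle_n\beta$ and the triples to the pairs exactly as before. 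I expect the only genuine subtlety to be the justification that contraction of an adjacent same-block pair is a join-preserving isomorphism onto $\parti(2n)$; once this (a special case of Lemma~\ref{relatlatt}) is in hand, each of the three equalities follows simply by transporting the defining expression for $\alpha\circ\beta$ across it.
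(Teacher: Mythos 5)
The paper offers nothing to compare against here: Lemma~\ref{interids} is stated with the remark that its proof ``is omitted as it follows easily from the definitions,'' so your argument is judged on its own terms --- and it is correct, supplying exactly the details the paper leaves out. Your organizing device is the right one: the monotone contraction $c:[3n]\to[2n]$ merging the two adjacent copies (the pairs $\{3m-2,3m-1\}$ for $\alpha^2\ast_n\beta$, resp.\ $\{3m-1,3m\}$ for $\alpha\ast_n\beta^2$), used twice. First, contraposition on the four points of a putative crossing transfers noncrossingness from $\alpha\ast_n\beta$ to $\alpha^2\ast_n\beta$, since $c$ carries distinct blocks to distinct blocks and is injective on those four points. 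Second, upgraded to a join-preserving bijection $[\sigma,1_{3n}]\isopil\parti(2n)$, it transports $(\alpha^2\shuffle_n\beta)\vee\{\{1,2,3\},\ldots,\{3n-2,3n-1,3n\}\}$ to $(\alpha\shuffle_n\beta)\vee\{\{1,2\},\ldots,\{2n-1,2n\}\}$ while matching the $m$-th triple with the $m$-th pair, so the two root extractions (both legitimate by Lemma~\ref{powers}) yield the same element of $\parti(n)$. The sandwich argument for the third identity (the partial-pairs partition is finer than the triples partition, yet its join with $\alpha^2\shuffle_n\beta$ already forces each triple into one block) and the mirror argument for the fourth are likewise correct.

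Two local repairs are needed. First, the key lattice isomorphism is \emph{not} a special case of Lemma~\ref{relatlatt}: that lemma describes the sublattice of partitions having prescribed sets $\pi_1,\ldots,\pi_k$ \emph{as blocks}, whereas your $[\sigma,1_{3n}]$ consists of partitions in which the pairs $\{3m-2,3m-1\}$ are merely \emph{contained in} blocks --- a different subposet, about which Lemma~\ref{relatlatt} says nothing. You must prove the isomorphism directly, but your own contraposition technique does so: for $\mu\in[\sigma,1_{3n}]$, a crossing of $\mu$ maps to a crossing of $c(\mu)$ (injectivity on the four points, as before), and conversely a crossing of $c(\mu)$ lifts to one of $\mu$ because the fibres of $c$ are convex; together with the evident order-preserving inverse (pulling blocks back along $c$), this makes $c$ a lattice isomorphism onto $\parti(2n)$. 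Second, in the crossing argument, ``the four points are distributed in two distinct blocks'' does not by itself exclude that $p_1,p_2$ (or $q_1,q_2$) are the two copies of a single element --- those pairs \emph{do} share a block; what excludes it is that copies occupy adjacent positions, while $p_1$ and $p_2$ are separated by $q_1$ (and $q_1,q_2$ by $p_2$). With these two points spelled out, your proof is complete.
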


\begin{proof}[Proof of Proposition~\ref{ispartial}]
  All pairs $(\alpha,0_n)$ and $(0_n,\alpha)$ being admissible, the fact that
  $0_n$ is a unit for the composition product $\circ$ is a direct consequence of its definition \eqref{compoprod} and is left as an exercise.

  Assume now that the triple $(\alpha,\beta,\gamma)$ is not admissible. By
  Lemma~\ref{admissibility}, this is equivalent to having at least
  one of the three pairs $(\alpha,\beta)$, $(\alpha,\gamma)$,
  $(\beta,\gamma)$ being not admissible, and therefore at least one of the
  three expressions $\alpha\circ\beta$, $\alpha\circ\gamma$ and
  $\beta\circ\gamma$ is not defined. From Lemma~\ref{admcond} and
  $\alpha,\beta\divides \alpha\circ\beta$; $\beta,\gamma\divides
  \beta\circ\gamma$, we get that both $(\alpha\circ\beta)\circ\gamma$ and
  $\alpha\circ(\beta\circ\gamma)$ are not defined.

  Assume finally that the triple $(\alpha,\beta,\gamma)$ is admissible, which is
  equivalent to assuming that the three pairs $(\alpha,\beta)$,
  $(\alpha,\gamma)$, $(\beta,\gamma)$ are admissible. 
  %
  Using associativity of joins in lattices and Lemma~\ref{relatlatt} we get:
  \begin{align*}
  \lefteqn{(\alpha\ast_n\beta\ast_n\gamma)\vee\{\{1,2,3\},\{4,5,6\},\ldots,\{3n-2,3n-1,3n\}\}}\\
  &=(\alpha\ast_n\beta\ast_n\gamma)\vee \{\{1,2\},\{3\},\{4,5\},\ldots,\{3n-2,3n-1\},\{3n\}\}\\
 &\ \ \ \ \ \ \ \ \ \ \ \ \ \ \ \ \vee\{\{1,2,3\},\{4,5,6\},\ldots,\{3n-2,3n-1,3n\}\}\\
  &=(((\alpha\ast_n\beta)\vee \{\{1,2\},\{3,4\},\ldots,\{2n-1,2n\}\})\ast_n\gamma)\vee\{\{1,2,3\},\{4,5,6\},\ldots,\{3n-2,3n-1,3n\}\}\\ 
  &=((\alpha\circ\beta)^2\ast_n\gamma)\vee\{\{1,2,3\},\{4,5,6\},\ldots,\{3n-2,3n-1,3n\}\},
\end{align*}
  so that, by applying Lemma~\ref{interids} we get
  $$
  	(\alpha\circ\beta)\circ\gamma=\left((\alpha\ast_n\beta\ast_n\gamma)\vee\{\{1,2,3\},\{4,5,6\},\ldots,\{3n-2,3n-1,3n\}\}\right)^{\frac{1}{3}}.
$$
  The same reasoning shows that 
  $$
  	\alpha\circ(\beta\circ\gamma)=\left((\alpha\ast_n\beta\ast_n\gamma)\vee\{\{1,2,3\},\{4,5,6\},\ldots,\{3n-2,3n-1,3n\}\}\right)^{\frac{1}{3}},
$$
  which implies $(\alpha\circ\beta)\circ\gamma=\alpha\circ(\beta\circ\gamma)$ and concludes the proof.
\end{proof}

We note that the same reasoning together with an inductive argument implies more generally:

\begin{lemma}
\label{translationlemma}
  Given $(\alpha_1,\ldots,\alpha_k)$ an admissible $k$-tuple, we have
  $$
  \alpha_1\circ\cdots \circ\alpha_k
  =\left((\alpha_1\ast_n\cdots\ast_n\alpha_k)\vee\{\{1,\ldots,k\},\ldots,\{kn-k+1,\ldots,kn\}\}\right)^{\frac{1}{k}}.
  $$
\end{lemma}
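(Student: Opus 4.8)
The plan is to argue by induction on $k$, following the template of the proof of Proposition~\ref{ispartial}. Throughout, write $G_k := \{\{1,\ldots,k\},\{k+1,\ldots,2k\},\ldots,\{kn-k+1,\ldots,kn\}\} \in \parti(kn)$ for the partition grouping consecutive blocks of size $k$. The base case $k=2$ is Definition~\ref{compp}, and $k=3$ is the identity established inside the proof of Proposition~\ref{ispartial}. Since $\alpha_1\circ\cdots\circ\alpha_k$ is unambiguously defined by associativity (Proposition~\ref{ispartial}), it suffices to prove the equivalent identity $(\alpha_1\circ\cdots\circ\alpha_k)^k = (\alpha_1\ast_n\cdots\ast_n\alpha_k)\vee G_k$, the $k$-th power map being injective on its domain by Lemma~\ref{powers}.

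For the inductive step, set $\mu := \alpha_1\circ\cdots\circ\alpha_{k-1} \in \parti(n)$. The inductive hypothesis gives $\mu^{k-1} = (\alpha_1\ast_n\cdots\ast_n\alpha_{k-1})\vee G_{k-1}$. Writing $P := \mu\circ\alpha_k = \alpha_1\circ\cdots\circ\alpha_k$, the goal becomes $P^k = (\alpha_1\ast_n\cdots\ast_n\alpha_k)\vee G_k$. Two ingredients are needed: (A) a $k$-level analogue of Lemma~\ref{interids}, namely $P = \mu\circ\alpha_k = \big((\mu^{k-1}\ast_n\alpha_k)\vee G_k\big)^{1/k}$ for any admissible pair $(\mu,\alpha_k)$; and (B) an absorption identity $(\mu^{k-1}\ast_n\alpha_k)\vee G_k = (\alpha_1\ast_n\cdots\ast_n\alpha_k)\vee G_k$ obtained after substituting the inductive hypothesis.

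Ingredient (B) is routine. The shuffle $\ast_n$ is built from the disjoint monotone embeddings $i_{k-1}$ and $e_1$ covering $[kn]$, so $i_{k-1}$ preserves joins of partitions on its (fixed) support in the sense of Lemma~\ref{relatlatt}; hence shuffling with $\alpha_k$ commutes with the inner join, yielding $\big((\alpha_1\ast_n\cdots\ast_n\alpha_{k-1})\vee G_{k-1}\big)\ast_n\alpha_k = (\alpha_1\ast_n\cdots\ast_n\alpha_k)\vee \widetilde{G}_{k-1}$, where $\widetilde{G}_{k-1} := i_{k-1}(G_{k-1})$ has blocks $\{bk+1,\ldots,bk+k-1\}$ and the associativity of $\ast_n$ has been used. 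Since each such block is contained in the block $\{bk+1,\ldots,bk+k\}$ of $G_k$, we have $\widetilde{G}_{k-1}\divides G_k$, whence $\widetilde{G}_{k-1}\vee G_k = G_k$ and (B) follows. Combining (A) and (B) gives $P^k = (\alpha_1\ast_n\cdots\ast_n\alpha_k)\vee G_k$, completing the induction.

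The main obstacle is ingredient (A), the general form of Lemma~\ref{interids}. One first checks that $(\mu^{k-1}\ast_n\alpha_k)\vee G_k$ lies in the image of the $k$-th power map (immediate from Lemma~\ref{powers}, since joining with $G_k$ forces each size-$k$ group into a single block), so its root is defined. It then remains to identify this root with $\mu\circ\alpha_k = \sqrt{(\mu\ast_n\alpha_k)\vee G_2}$. The key point is that collapsing each size-$k$ group to a single point induces the same partition of $[n]$ in both cases: in $\mu^{k-1}\ast_n\alpha_k$ the first $k-1$ slots of each group carry the $k-1$ identical replicas of a single element of $\mu$ (already mutually in one block), while the last slot carries the corresponding element of $\alpha_k$, exactly as the two slots do when $k=2$. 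Thus the extra replicas create no connections beyond those already recorded by $\mu$ and $\alpha_k$, and the relation among groups generated after joining with $G_k$ coincides with the one generated after joining $\mu\ast_n\alpha_k$ with $G_2$. The care required here is to track, using the admissibility of $(\mu,\alpha_k)$ and the noncrossing structure, that the replication introduces no spurious crossings or mergers; this is the genuinely combinatorial heart of the argument, and it mirrors the $k=3$ verification already carried out for Lemma~\ref{interids}.
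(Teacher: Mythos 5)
Your proposal is correct and takes essentially the same route as the paper: the paper's entire proof of this lemma is the remark that ``the same reasoning together with an inductive argument'' applies, referring to the proof of Proposition~\ref{ispartial}, and your induction --- with ingredient (B) reproducing that proof's join/shuffle commutation via Lemma~\ref{relatlatt} and absorption of the inner grouping, and ingredient (A) being the $k$-level generalization of Lemma~\ref{interids} --- is precisely that argument made explicit. Your collapsing-of-groups justification of (A) is in fact more detailed than what the paper offers, since the paper omits the proof of Lemma~\ref{interids} entirely.
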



\subsection{The Kreweras automorphism}
\label{ssec:Kc}

In this subsection we give a brief account of
Kreweras complementation. This is a well-studied and classical subject and we
only hint at how it can be described in the algebraic formalism we have
introduced, omitting details of proofs that can be found (or easily adapted
from) the Nica and Speicher textbook \cite{Nica-Speicher:Lectures}.

Given a noncrossing partition $\alpha\in \parti(n)$, the set of partitions
$\beta$ such that $(\alpha,\beta)$ is admissible is ordered by coarsening. We
have seen 
in Lemma~\ref{admis-is-lattice}
that it is stable by meets and joins and
forms a sublattice of the lattice of noncrossing partitions $\parti(n)$. Its
maximal element is, by definition, the Kreweras complement of $\alpha$, denoted
$K(\alpha)$. Concretely, the latter is determined in the following way: if
$1\leqslant i<j\leqslant n$, then $i$ and $j$ are in the same block of
$K(\alpha)$ if and only if $\{k| i+1\leq k\leq j\}$ is the union of blocks of
$\alpha$.

This definition is the most common one, but not the best suited for our
purposes. We will often use instead another one that underlies the
equivalence between Eq.~(\ref{eq1}) and its restatement in terms of
Kreweras complements when $p=2$, see \cite[Exercise
14.3]{Nica-Speicher:Lectures}.
 
\begin{defi}\label{Kco2}
  Let $\alpha$ be a noncrossing partition in $\parti(n)$. The {\em Kreweras complement} $K(\alpha)$ is the
  unique noncrossing partition in $\parti(n)$ such that $(\alpha,K(\alpha)) $ is
  admissible and
  \begin{equation}\label{eq2}
  \alpha\circ K(\alpha)=1_{n}.
  \end{equation}
\end{defi}

\begin{proof} 
  For the definition to be consistent, one has to show the existence and
  uniqueness of $K(\alpha)$ solving Eq.~\ref{eq2}. The existence follows from
  the classical construction of the Kreweras complement
  \cite{Kreweras:1972}. Uniqueness follows from the strict monotonicity of the composition
  product $\circ$ (Lemma~\ref{monoto}).
\end{proof}

The same definition applies {\it mutatis mutandis} to define Kreweras complements
in $\parti(S)$. We denote by $K^S(\alpha)$ the Kreweras complement in $\parti(S)$ of
an element $\alpha$ in $\parti(S)$.

The Kreweras complement is a set automorphism (the equation $\alpha\circ
\beta=1_{n}$ can be solved uniquely for $\alpha$ given $\beta$), and a
non-involutive anti-automorphism of posets. Strict monotonicity indeed implies that it reverses the order:
$\alpha\divides \gamma\iff K(\gamma)\divides K(\alpha)$ and that, in this formula, $\alpha\neq \gamma\iff K(\gamma)\neq K(\alpha)$. See
\cite{Kreweras:1972} for the classical presentation and proofs.
Notice
that non-involutivity is equivalent to the noncommutativity of the composition product  \eqref{compoprod}:
$\alpha\circ\beta\neq\beta\circ\alpha$.

In particular, since $K(0_n)=1_n$ we have:
\begin{lemma}
  Given $\alpha\in\parti(n)$, the two intervals $[0_n,\alpha]$ and $[K(\alpha),1_n]$
  are anti-isomorphic lattices.
\end{lemma}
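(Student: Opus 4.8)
The plan is to show that the Kreweras complement $K$ itself, suitably restricted, furnishes the desired anti-isomorphism. The key inputs are already in hand: $K$ is a set automorphism of $\parti(n)$ (in particular a bijection), it satisfies $K(0_n)=1_n$, and it is an order anti-automorphism in the strong sense recorded above, namely $\alpha\divides\gamma \iff K(\gamma)\divides K(\alpha)$ for all $\alpha,\gamma\in\parti(n)$.

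First I would check that $K$ carries $[0_n,\alpha]$ into $[K(\alpha),1_n]$. If $0_n\divides\beta\divides\alpha$, then applying the order-reversing biconditional (together with $K(0_n)=1_n$) yields $K(\alpha)\divides K(\beta)\divides 1_n$, so that $K(\beta)\in[K(\alpha),1_n]$. Next I would establish surjectivity onto this interval. Given $\delta$ with $K(\alpha)\divides\delta\divides 1_n$, I use that $K$ is a bijection of $\parti(n)$ to write $\delta=K(\beta)$ for a unique $\beta$; the relation $K(\alpha)\divides K(\beta)$ is then equivalent, by the very same biconditional, to $\beta\divides\alpha$, whence $\beta\in[0_n,\alpha]$ and $\delta$ lies in the image. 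Injectivity is inherited from $K$. Thus $K$ restricts to an order-reversing bijection $[0_n,\alpha]\to[K(\alpha),1_n]$ whose inverse is likewise order-reversing.

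Finally I would upgrade this order anti-isomorphism to a lattice anti-isomorphism. Intervals in a lattice are sublattices, so both $[0_n,\alpha]$ and $[K(\alpha),1_n]$ are lattices in their own right. It is a formal fact that a bijection $f$ between lattices satisfying $x\divides y \iff f(y)\divides f(x)$ automatically exchanges meets and joins, that is $f(x\vee y)=f(x)\wedge f(y)$ and $f(x\wedge y)=f(x)\vee f(y)$; this follows at once from the characterization of $\vee$ and $\wedge$ as least upper and greatest lower bounds, since an order-reversing bijection sends suprema to infima and conversely. Applying this to the restricted $K$ completes the argument. I do not anticipate a genuine obstacle: the statement is a formal consequence of $K$ being an order-reversing bijection with $K(0_n)=1_n$, the only point requiring a moment's care being surjectivity, which rests essentially on $K$ being a bijection of the whole of $\parti(n)$ rather than merely order-reversing on the interval.
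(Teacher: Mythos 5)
Your proposal is correct and follows the same route the paper takes: the paper derives the lemma directly from the facts it records just before stating it, namely that $K$ is a set automorphism, an order anti-automorphism ($\alpha\divides\gamma \iff K(\gamma)\divides K(\alpha)$), and satisfies $K(0_n)=1_n$, which is exactly the argument you spell out in detail. Your write-up simply makes explicit the restriction, surjectivity, and meet/join-exchange steps that the paper leaves implicit in its ``In particular, since $K(0_n)=1_n$ we have'' remark.
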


These notions generalize to the relative Kreweras complement setting:

\begin{defi}\label{Kco2rel} (Cf.~Nica--Speicher~\cite[Lecture 18]{Nica-Speicher:Lectures}; 
  the notion goes back to \cite{Nica-Speicher:9604011}.)
  Let $\alpha\divides \beta$ be two partitions in $\parti(n)$. The 
  {\em relative Kreweras complement}
  $K_\beta(\alpha)$ is the unique noncrossing partition in $\parti(n)$ such
  that $(\alpha,K_\beta(\alpha))$ is admissible and
  \begin{equation}\label{eq3}
  \alpha\circ K_\beta(\alpha)=\beta.
  \end{equation}
\end{defi}
 
The relative definition can be explained
%
as performing Kreweras complementation on each block of $\beta$.
This follows from Lemma~\ref{relatlatt}, and can be explained more directly
as follows. Given such a block $\beta_i$, one considers its sub-blocks in
$\alpha$. They form a noncrossing partition $\gamma_i$ of $\beta_i$. The
Kreweras complement of $\gamma_i$ in the set $\beta_i$ is a noncrossing
partition $K^{\beta_i}(\gamma_i)$. The element $K_\beta(\alpha)$ is then
obtained as the union of all the $K^{\beta_i}(\gamma_i)$.

This observation allows one to deduce the properties of the relative case from
the absolute case. In particular, $K_\beta$ is a set automorphism and an
anti-automorphism of posets of the interval $[0_n,\beta]$
(it reverses the order: $0_n\divides
\alpha\divides \nu\divides \beta \iff 0_n{=}K_\beta(\beta)\divides
K_\beta(\nu)\divides K_\beta(\alpha)\divides K_\beta(0_n){=}\beta$).
The next result follows immediately from this:

\begin{lemma}
  Given noncrossing partitions $\alpha, \beta\in\parti(n)$ with 
  $\alpha\divides \beta$, there are canonical 
  isomorphisms of lattices
  $$
  [0_n,\alpha]\op
  \stackrel{K_\beta}\isopil
  [K_\beta(\alpha),\beta]
  \qquad \text{ and } \qquad
  [\alpha,\beta]\op   \stackrel{K_\beta}\isopil
  [0_n,K_\beta(\alpha)]  .
  $$
  (Here $\op$ denotes the lattice with the opposite order.)
\end{lemma}


\subsection{Some applications}
\label{ssec:applications}

To finish this algebraic part of the article we show how the formalism
allows one to recover and rephrase two key results of the theory of noncrossing
partitions obtained respectively in \cite{Nica-Speicher:Lectures} and
\cite{Arizmendi-Vargas:2012}, with a view towards applications to free
probability.
The point is that the arithmetic formalism allows easily to perform
computations with Kreweras complements. For example, for
$(\alpha,\beta)$ admissible, we have
$$
\alpha\circ\beta\circ K(\alpha\circ\beta)=1_n=\alpha\circ K(\alpha),
$$
so that, for $(\alpha,\beta)$ admissible we always have
$$
K(\alpha)=\beta\circ K(\alpha\circ\beta).
$$

\begin{prop}\label{divassoc}
  Assume that $(\alpha,\beta,\gamma)$ is an admissible triple. Then 
  $$
  K_{\alpha\circ\beta\circ\gamma} (\alpha\circ\beta)
  = \gamma
  = K_{K_{\alpha\circ\beta\circ\gamma} (\alpha)} (K_{\alpha\circ\beta} (\alpha)).
  $$
\end{prop}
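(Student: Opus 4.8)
The plan is to reduce everything to the defining characterization of the relative Kreweras complement in Definition~\ref{Kco2rel}: for $\mu\divides\delta$, the partition $K_\delta(\mu)$ is the \emph{unique} $\nu$ such that $(\mu,\nu)$ is admissible and $\mu\circ\nu=\delta$. Uniqueness here is exactly what makes the argument work, and it rests on the strict monotonicity of the composition product (Lemma~\ref{monoto}). So the whole proof is a matter of exhibiting, in each case, a candidate that manifestly satisfies the two defining conditions and then invoking uniqueness; no genuine computation with blocks is needed. Throughout I would write $\delta:=\alpha\circ\beta\circ\gamma$, which is well-defined and for which the two bracketings agree by associativity of the partial monoid (Proposition~\ref{ispartial}), and I would freely use that admissibility of the triple $(\alpha,\beta,\gamma)$ entails (via Lemma~\ref{admissibility}) admissibility of all pairs $(\alpha,\beta)$, $(\alpha,\gamma)$, $(\beta,\gamma)$, together with the fact, recorded in the proof of Proposition~\ref{ispartial}, that the intermediate products $(\alpha\circ\beta,\gamma)$ and $(\alpha,\beta\circ\gamma)$ are again admissible and that $\mu\divides\mu\circ\nu$ whenever $(\mu,\nu)$ is admissible.

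For the first equality, I would simply check that $\gamma$ meets the two conditions characterizing $K_\delta(\alpha\circ\beta)$. The pair $(\alpha\circ\beta,\gamma)$ is admissible (this is precisely the statement, used in Proposition~\ref{ispartial}, that $(\alpha\circ\beta)\circ\gamma$ is defined under admissibility of the triple), and $(\alpha\circ\beta)\circ\gamma=\alpha\circ\beta\circ\gamma=\delta$ by associativity. Since moreover $\alpha\circ\beta\divides(\alpha\circ\beta)\circ\gamma=\delta$, the partition $\alpha\circ\beta$ lies below $\delta$ and $\gamma$ is a legitimate candidate; uniqueness in Definition~\ref{Kco2rel} then forces $K_\delta(\alpha\circ\beta)=\gamma$.

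For the second equality I would first evaluate the two inner complements. The partition $\beta\circ\gamma$ satisfies $\alpha\circ(\beta\circ\gamma)=\delta$ with $(\alpha,\beta\circ\gamma)$ admissible and $\alpha\divides\delta$, so by uniqueness $K_\delta(\alpha)=\beta\circ\gamma$; likewise $\beta$ satisfies $\alpha\circ\beta=\alpha\circ\beta$ with $(\alpha,\beta)$ admissible and $\alpha\divides\alpha\circ\beta$, so $K_{\alpha\circ\beta}(\alpha)=\beta$. Substituting these identifications, the right-hand side collapses to $K_{\beta\circ\gamma}(\beta)$. Finally $\gamma$ satisfies $\beta\circ\gamma=\beta\circ\gamma$ with $(\beta,\gamma)$ admissible and $\beta\divides\beta\circ\gamma$, so one last application of uniqueness gives $K_{\beta\circ\gamma}(\beta)=\gamma$, completing the chain.

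The conceptual content is thus entirely carried by associativity and strict monotonicity; the only thing requiring care, and hence the place I would expect to have to argue most explicitly, is the bookkeeping of hypotheses — namely verifying at each step that the relevant relative complement is actually \emph{defined}, i.e.\ that the required admissibility holds (for the intermediate products $\alpha\circ\beta$ and $\beta\circ\gamma$ paired with the remaining factor) and that the relevant divisibility $\mu\divides\delta$ is in force. All of these follow from the admissibility-propagation facts in Proposition~\ref{ispartial} and Lemma~\ref{admcond}, but they should be invoked deliberately so that each appeal to the uniqueness clause of Definition~\ref{Kco2rel} is justified.
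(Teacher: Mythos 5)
Your proof is correct and follows essentially the same route as the paper's: the paper also treats the first equality as immediate from the defining characterization of the relative Kreweras complement, and proves the second by the identifications $K_{\alpha\circ\beta}(\alpha)=\beta$ and $K_{\alpha\circ\beta\circ\gamma}(\alpha)=\beta\circ\gamma$, collapsing the right-hand side to $K_{\beta\circ\gamma}(\beta)=\gamma$. The only difference is that you spell out the admissibility and uniqueness bookkeeping (via Lemma~\ref{admissibility}, Lemma~\ref{monoto} and Proposition~\ref{ispartial}) which the paper leaves implicit.
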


\begin{proof}
  The first equation is clear. For the second, use that 
  $K_{\alpha\circ\beta} (\alpha) =\beta$ and
  $K_{\alpha\circ\beta\circ\gamma}(\alpha)=\beta\circ\gamma$, so that
  $$
  K_{K_{\alpha\circ\beta\circ\gamma} (\alpha)}(K_{\alpha\circ\beta} (\alpha))
  = K_{K_{\alpha\circ\beta\circ\gamma} (\alpha)} (\beta)
  = K_{\beta\circ\gamma} (\beta)
  = \gamma.
  $$
\end{proof}

Recall from \cite{Arizmendi-Vargas:2012} that a $k$-preserving noncrossing
partition $\alpha$ in $\parti(kn)$ is called {\em $k$-completing} if and only if
$$
\alpha\vee\{\{1,\ldots,k\},\ldots,\{kn-n+1,\ldots,kn\}\}=1_{kn}.
$$
An admissible $k$-tuple in $\parti(n)$,
$(\alpha_1,\ldots,\alpha_k)$, is called {\em complete} if and only if
$\alpha_1\circ\cdots\circ\alpha_k=1_{n}$. 
Recall also that a multichain of length $k$
in a poset is a non-decreasing sequence of elements
$$
x_0\leq x_1\leq\cdots\leq x_k.
$$
See \cite{Mastnak-Nica:TAMS2010} for applications of multichains in the
lattice of noncrossing partitions to free probability.

The following proposition summarizes results due to Edelman~\cite{Edelman:1980}
and Arizmendi--Vargas~\cite{Arizmendi-Vargas:2012}.

\begin{prop}\label{canobij}
There are  canonical bijections between
\begin{enumerate}
\item admissible $k$-tuples in $\parti(n)$, 
\item $k$-preserving noncrossing partitions in $\parti(kn)$, 
\item multichains of length $k-1$ in the poset $\parti(n)$, 
\item complete admissible $(k{+}1)$-tuples in $\parti(n)$,
\item $(k{+}1)$-completing noncrossing partitions in $\parti((k+1)n)$.
\end{enumerate}
\end{prop}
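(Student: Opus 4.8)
The plan is to take (1) as a hub and produce, for each of (2), (3), (4), an explicit pair of mutually inverse maps to and from admissible $k$-tuples, and finally to link (4) with (5); chaining these yields all the asserted bijections. The maps themselves are built entirely from the operations already developed: the perfect shuffle, the composition product $\circ$, and (relative) Kreweras complements.

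First I would dispatch (1)$\leftrightarrow$(2), which is almost the definition of $k$-preserving. The forward map sends an admissible $k$-tuple $(\alpha_1,\ldots,\alpha_k)$ to $\alpha_1\shuffle_n\cdots\shuffle_n\alpha_k\in\partipres{k}(n)$, admissibility being exactly the hypothesis that this shuffle is noncrossing. For the inverse I would recover $\alpha_i$ as the $\st$ of the restriction of $\alpha$ to $\{j\in[kn]:j\equiv i \bmod k\}$; since $\alpha$ is $k$-preserving no block mixes residues, so this is well defined, and since $\alpha$ is itself noncrossing the resulting tuple is admissible. The two maps are visibly inverse. The same argument applied to $(k{+}1)$-tuples gives a bijection between admissible $(k{+}1)$-tuples and $\partipres{k+1}(n)$, and restricting it is how I would obtain (4)$\leftrightarrow$(5). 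Here one must match the two extra constraints: writing $P$ for the partition of $[(k{+}1)n]$ into consecutive blocks of size $k{+}1$ (the join term in Lemma~\ref{translationlemma}), that Lemma gives $\alpha_1\circ\cdots\circ\alpha_{k+1}=\bigl((\alpha_1\ast_n\cdots\ast_n\alpha_{k+1})\vee P\bigr)^{\frac{1}{k+1}}$; since $1_n^{k+1}=1_{(k+1)n}$ and taking $(k{+}1)$-st roots is injective (Lemma~\ref{powers}), ``complete'' ($\alpha_1\circ\cdots\circ\alpha_{k+1}=1_n$) is equivalent to ``$(k{+}1)$-completing'' ($\alpha\vee P=1_{(k+1)n}$).

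Next I would handle (1)$\leftrightarrow$(4) by appending a Kreweras complement. Given an admissible $k$-tuple, set $\alpha_{k+1}:=K(\alpha_1\circ\cdots\circ\alpha_k)$, so that $\alpha_1\circ\cdots\circ\alpha_{k+1}=1_n$ by Definition~\ref{Kco2} and the extended tuple is complete. To see it is admissible I would use $\alpha_i\divides\alpha_1\circ\cdots\circ\alpha_k$ for each $i\le k$ together with Lemma~\ref{admcond} to promote admissibility of the pair $(\alpha_1\circ\cdots\circ\alpha_k,\alpha_{k+1})$ to admissibility of each $(\alpha_i,\alpha_{k+1})$, then invoke Lemma~\ref{admissibility}. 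The inverse forgets $\alpha_{k+1}$, which is forced by completeness and the unicity in Lemma~\ref{monoto}. For (1)$\leftrightarrow$(3) I would pass to partial composition products: an admissible $k$-tuple maps to $x_i:=\alpha_1\circ\cdots\circ\alpha_{i+1}$ for $i=0,\ldots,k-1$, a nondecreasing sequence because $\gamma\divides\gamma\circ\delta$ (apply Lemma~\ref{monoto} with $0_n\divides\alpha_{i+1}$ in the second slot), hence a multichain of length $k-1$. The inverse reads a multichain $x_0\divides\cdots\divides x_{k-1}$ back as $\alpha_1:=x_0$ and $\alpha_{i+1}:=K_{x_i}(x_{i-1})$ via relative Kreweras complements (Definition~\ref{Kco2rel}), which are defined precisely because $x_{i-1}\divides x_i$ and satisfy $x_{i-1}\circ\alpha_{i+1}=x_i$.

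The step I expect to be the main obstacle is the admissibility bookkeeping in the inverse of (1)$\leftrightarrow$(3): one must show the reconstructed tuple is genuinely admissible and that its partial products recover the $x_i$. I would argue by induction on $i$, maintaining ``$(\alpha_1,\ldots,\alpha_i)$ is admissible with $\alpha_1\circ\cdots\circ\alpha_i=x_{i-1}$''. The inductive step uses that $(x_{i-1},K_{x_i}(x_{i-1}))$ is admissible by construction, that $\alpha_j\divides x_{i-1}$ for $j\le i$, Lemma~\ref{admcond} to obtain pairwise admissibility of $(\alpha_j,\alpha_{i+1})$, and Lemma~\ref{admissibility} to conclude admissibility of the $(i{+}1)$-tuple; associativity in the partial monoid (Proposition~\ref{ispartial}) then gives $\alpha_1\circ\cdots\circ\alpha_{i+1}=x_{i-1}\circ K_{x_i}(x_{i-1})=x_i$. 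Once this is in place, unicity in Lemma~\ref{monoto} shows the two maps are mutually inverse, closing the web. A secondary point to keep straight is the indexing: a multichain of length $k-1$ carries $k$ elements $x_0,\ldots,x_{k-1}$, matching the $k$ partial products of a $k$-tuple, and a complete $(k{+}1)$-tuple has its last entry determined, so its $k$ free entries match (1).
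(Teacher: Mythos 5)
Your proposal is correct and follows essentially the same route as the paper's proof: (1) as the hub, the perfect shuffle for (1)$\leftrightarrow$(2), partial composition products with relative Kreweras complements for (1)$\leftrightarrow$(3), appending $K(\alpha_1\circ\cdots\circ\alpha_k)$ for (1)$\leftrightarrow$(4), and Lemma~\ref{translationlemma} for (4)$\leftrightarrow$(5). The only differences are that you spell out the admissibility and unicity bookkeeping (via Lemmas~\ref{admcond}, \ref{admissibility}, \ref{monoto}) that the paper leaves implicit, and that your writing of the inverse of (1)$\leftrightarrow$(3) as $\alpha_{i+1}=K_{x_i}(x_{i-1})$ is the form consistent with Definition~\ref{Kco2rel}.
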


\begin{proof}
\begin{itemize}
\item (1) $\Leftrightarrow$ (2): We already know that the two sets are in bijection by 
$$
	(\alpha_1,\ldots,\alpha_k)\longmapsto \alpha_1\shuffle_n\cdots\shuffle_n\alpha_k.
$$
\item (1) $\Leftrightarrow$ (3): The bijection is given by 
$$
	(\alpha_1,\ldots,\alpha_k)\longmapsto \alpha_1\divides 
\alpha_1{\circ}\alpha_2\divides \cdots\divides \alpha_1{\circ}\cdots{\circ}\alpha_k.
$$
The inverse bijection by 
$$
	\beta_1\divides \beta_2\divides \cdots\divides \beta_k
\longmapsto 
(\beta_1,K_{\beta_1}(\beta_2),\ldots,K_{\beta_{k-1}}(\beta_k)).
$$
\item (1) $\Leftrightarrow$ (4): The bijection is given by
$$
(\alpha_1,\ldots,\alpha_k)\longmapsto (\alpha_1,\ldots,\alpha_k,K(\alpha_1{\circ}\cdots{\circ}\alpha_k)).
$$
\item (4) $\Leftrightarrow$ (5): This follows from Lemma~\ref{translationlemma}.
\end{itemize}
\end{proof}


\section{From classical incidence (co)algebras to decalage}
\label{sec:class}


\subsection{The standard construction}
\label{ssec:standardcons}

To proceed further, it will be useful to recall classical 
M\"obius inversion \cite{Rota:Moebius}. 
Given functions $F,G:\N^\ast \to \C$ (called arithmetic functions),
the classical M\"obius inversion principle (which goes back to Euler) states
that 
$$
F(n) = \sum_{d \divides n} G(d) 
\qquad \text{if and only if} 
\qquad
G(n) = \sum_{d\divides n} F(d) \mu(n/d) ,
$$
where $\mu$ is the M\"obius function, given by
$$
\mu(n) = \begin{cases} (-1)^r & \text{ if  $n$ is a product of $r$ distinct 
primes} \\
0 & \text{ $n$ contains a square factor.}\end{cases}
$$

Since the work of Rota~\cite{Rota:Moebius} this is considered a
special case of M\"obius inversion for posets as follows.
Consider the poset of 
positive integers $\divposet$ with order given by divisibility, 
and its incidence coalgebra
$\incCoalg{\divposetnoparens}$ of intervals, with comultiplication given by
$$
	\Delta([n,m]):=\sum\limits_{n\divides k\divides m}[n,k]\otimes [k,m].
$$
The incidence algebra $\incAlg{\divposetnoparens}$ associated to $\divposet$ is the convolution 
algebra defined by duality from the coalgebra $\incCoalg{\divposetnoparens}$, in analogy with Definition~\ref{incidal}.
Its elements are linear functions on $\incCoalg{\divposetnoparens}$.
Among those functions are the zeta function $\zeta(n,m):=1$ for all $n\divides m$,
and its convolution inverse, the M\"obius function $\mu := \zeta^{\ast-1}$.
The M\"obius inversion formula in the poset version says that 
$$
	g(n,m)=\sum\limits_{n\divides k
	\divides m}f(n,k) 
	\quad \makebox{if and only if} \quad 
	f(n,m)=\sum\limits_{n\divides k \divides m}g(n,k) \mu(k,m).
$$
The link back to classical M\"obius inversion relies on the observation that 
both the zeta function and the M\"obius function have the 
property that their value on an interval $[n,m]$ depends only
on the number ${m/n}$; the functions with this property form a subalgebra
$\redIncAlg{\divposetnoparens}\subset \incAlg{\divposetnoparens}$ 
called the reduced incidence algebra. That these functions form indeed a 
subalgebra follows from the observation that the two intervals $[n,m]$ and 
$[1,\frac{m}{n}]$ are canonically isomorphic, so as to justify the change of 
summation in the middle step of the calculation
$$\textstyle
(f\ast g)(n,m)
=
\sum\limits_{n\divides k\divides m}f(n,k)g(k,m)
=
\sum\limits_{n\divides k\divides m}f(\frac{n}{n}, \frac{k}{n})g(\frac{k}{n}, \frac{m}{n})
= 
\sum\limits_{1\divides d\divides \frac{m}{n}}f(1, d)g(d, \frac{m}{n})
=
(f\ast g)(1, \frac{m}{n}).
$$
These functions can also be described as those of the form $f(n,m) = 
F(m/n)$ for some function $F$ on the multiplicative monoid
$\multmonoid$. This gives a canonical 
identification 
$$
\redIncAlg{\divposetnoparens}
\simeq \incAlg{\multmonoidnoparens} .
$$
The latter is the convolution algebra of the incidence coalgebra of 
$\multmonoid$,
with convolution product $\circ$ given by
$$
(F\circ G)(m) = \sum\limits_{i \cdot j = m} F(i)G(j),
$$
which is the convolution product associated to the comultiplication
$$
\Delta(m):=\sum\limits_{i \cdot j = m} i \otimes j.
$$
In other words, classical M\"obius inversion, although generally 
formulated in the incidence algebra of the divisibility poset, 
is actually rather a property of the multiplicative monoid, the two aspects 
being related via
 the homomorphism of coalgebras
\begin{eqnarray*}
  \incCoalg{\divposetnoparens} & \longrightarrow & \incCoalg{\multmonoidnoparens}  \\
  {}[n,m] & \longmapsto & m/n .
\end{eqnarray*}


\subsection{Categorical and simplicial interpretation}
\label{ssec:categoricalinterp}

The relationship between the two approaches (intervals in posets vs.~elements in a
monoid) is formulated elegantly by regarding both posets and monoids as examples
of categories: Recall that a poset can be regarded as a category where there is an
arrow $x\to y$ whenever $x\leq y$, and that a monoid $M$ gives rise to a category
with a single object and whose arrows are the elements of $M$, the composition of
arrows being given by monoid multiplication in $M$. It was observed by Content,
Lemay and Leroux~\cite{Content-Lemay-Leroux} that the assignment $a\divides b
\mapsto \frac{b}{a}$ constitutes a functor from the category $\divposet$ to the
category $\multmonoid$. Furthermore, this functor is CULF (``conservative'' and
having ``unique lifting of factorizations''), which they identified as the class
of functors that induce coalgebra homomorphisms covariantly at the level of
incidence coalgebras, or equivalently, algebra homomorphisms contravariantly
between the incidence algebras. The functor thus induces the above coalgebra
homomorphism
$$
\incCoalg{\divposetnoparens} \onto \incCoalg{\multmonoidnoparens}
$$
(which is a quotient map) and dually the embedding of convolution
algebras
$$
\incAlg{\multmonoidnoparens} \into \incAlg{\divposetnoparens} .
$$
  
More recently, it was observed that in the setting of simplicial
sets this relationship is an instance of a very general phenomenon: the
nerve of the divisibility poset $\divposet$ is the lower decalage of the
bar complex of the monoid $\multmonoid$, that the functor is the
canonical map that always exists from a decalage back to the original
simplicial set, and that this functor is
CULF whenever the simplicial set is the nerve of a category (or more generally 
a decomposition space)~\cite{Galvez-Kock-Tonks:1612.09225}. In this way, one may say loosely
that $\divposet$ is just a ``shift'' of $\multmonoid$. Let us briefly
explain the decalage viewpoint, as we will find exactly the same situation
for noncrossing partitions.

One way to have posets, monoids and categories on equal footing is in terms
of their nerves, which are simplicial sets, 
i.e.~functors $X:\simplexcategory\op\to\kat{Set}$
where $\simplexcategory$ is the category of
finite nonempty ordinals $\{0,..,n\}$ and order-preserving maps.
The unique order-preserving injection from
$\{0,1,\dots,n-1\}$ into $\{0,1,\dots,n\}$ whose image does not contain $i$
induces a {\em face} map $d_i : X_n \to X_{n-1}$. The unique
order-preserving surjection from $\{0,1,\dots,n\}$ to $\{0,1,\dots,n-1\}$
that maps $i$ and $i+1$ to $i$ induces a {\em degeneracy} map $s_i :
X_{n-1} \to X_n$. The relations obeyed by the face and degeneracy maps
are called the simplicial identities; they can be used to define simplicial
sets without using the language of categories and functors.
Recall
that the nerve of a category $\CC$ is the simplicial set $X:=N\CC :
\simplexcategory\op\to\kat{Set}$ where a $k$-simplex is a string of $k$
composable arrows in $\CC$. In particular, $X_0$ is the set of objects, and
$X_1$ is the set of arrows.
In the special case of a poset, $X$ is also
called the {\em order complex\ }\!: $X_0$ is the set of elements in the poset,
$X_1$ is the set of all intervals, and $X_k$ is the set of all multichains
of length $k$ (meaning that there are $k$ steps, or equivalently $k+1$
poset elements in the multichain). For a monoid $M$, the nerve is also
called the {\em bar complex},\footnote{The word ``bar'' comes from the first
paper on the subject (Eilenberg--Mac Lane), where an element in $X_k$ was
denoted $x_1|x_2|\cdots|x_k$. Here we cannot use that notation, as
we employ the symbol $\divides $ for divisibility and ordering.} and is
traditionally denoted $BM$. Here the set of $k$-simplices $X_k$ is the set
$M^k$. The outer face maps project away the first or last element
of a $k$-tuple, 
while the inner face maps multiply adjacent elements.
  
For any simplicial set $X$, the {\em lower decalage} of $X$, denoted
$\Decbot(X)$, is the simplicial set obtained by forgetting $X_0$ and
shifting down all higher $X_k$, so that
$$
\Decbot(X)_k = X_{k+1} .
$$
This is a simplicial set again: the face and degeneracy maps are all the
face and degeneracy maps of $X$ except $d_0$ and $s_0$, and they are
shifted down by one index, so that the new $d_i$ are the old $d_{i+1}$ (and
the new $s_i$ are the old $s_{i+1}$). There is a canonical simplicial map
$\Decbot(X) \to X$, often called the {\em dec map}, given by using the
original $d_0$ maps. Altogether we get (degeneracy maps are not represented):
\[
\begin{tikzcd}[column sep={22mm,between origins},row sep={15mm,between origins}]
X: & X_0  &
  \ar[l, shift left=3.5pt,  "d_0" on top, shorten >=1mm,shorten <=1mm]
  \ar[l, shift right=3.5pt,  "d_1" on top, shorten >=1mm,shorten <=1mm]
X_1 & 
  \ar[l, shift left=7pt, "d_0" on top, shorten >=1mm,shorten <=1mm]
  \ar[l, shift right=7pt, "d_2" on top, shorten >=1mm,shorten <=1mm]
  \ar[l, "d_1" on top, shorten >=1mm,shorten <=1mm]
X_2 & \cdots  
\\
\Decbot(X): & 
X_1 \ar[u, "d_0"] &
  \ar[l, shift left=3.5pt,  "d_1" on top, shorten >=1mm,shorten <=1mm]
  \ar[l, shift right=3.5pt,  "d_2" on top, shorten >=1mm,shorten <=1mm]
X_2 \ar[u, "d_0"] & 
  \ar[l, shift left=7pt, "d_1" on top, shorten >=1mm,shorten <=1mm]
  \ar[l, shift right=7pt, "d_3" on top, shorten >=1mm,shorten <=1mm]
  \ar[l, "d_2" on top, shorten >=1mm,shorten <=1mm]
X_3 \ar[u, "d_0"] & \cdots
\end{tikzcd}
\]
The simplicial identities ensure that this map is a  map of simplicial sets.
  
Applying this construction to the bar complex of $\multmonoid$, we obtain
  \[
  \begin{tikzcd}[column sep={30mm,between origins},row sep={20mm,between origins}]
  B\N^\ast: & *  & 
  \ar[l, shift left=4pt,  "d_0" on top, shorten >=2mm,shorten <=2mm]
  \ar[l, shift right=4pt,  "d_1" on top, shorten >=2mm,shorten <=2mm]
\N^\ast &
  \ar[l, shift left=8pt, "d_0" on top, shorten >=2mm,shorten <=2mm]
  \ar[l, shift right=8pt, "d_2" on top, shorten >=2mm,shorten <=2mm]
  \ar[l, "d_1" on top, shorten >=2mm,shorten <=2mm]
\N^\ast{\!\times}\N^\ast & \cdots  
\\
  \Decbot(B\N^\ast): & \N^\ast 
  \ar[u, "d_0", shorten >=1mm,shorten <=1mm] &
  \ar[l, shift left=4pt, pos=0.45, "d_1" on top, shorten >=1mm,shorten <=1mm]
  \ar[l, shift right=4pt, pos=0.45, "d_2" on top, shorten >=1mm,shorten <=1mm]
\N^\ast{\!\times}\N^\ast 
\ar[u, "d_0", shorten >=2mm,shorten <=2mm] & 
  \ar[l, shift left=8pt, "d_1" on top, shorten >=1mm,shorten <=1mm]
  \ar[l, shift right=8pt,  "d_3" on top, shorten >=1mm,shorten <=1mm]
  \ar[l,  "d_2" on top, shorten >=1mm,shorten <=1mm]
\N^\ast{\!\times}\N^\ast{\!\times}\N^\ast
  \ar[u, "d_0", shorten >=2mm,shorten <=2mm] & \cdots
  \end{tikzcd}
  \]
  In the left part of the bottom row, the face maps $d_1$ and $d_2$ send a pair $(a,b)$ to 
  $ab$ and $a$, respectively. Clearly we have $a \divides  ab$, which can be 
  interpreted as an interval in the divisibility poset. This is in fact 
  part of a canonical isomorphism of simplicial sets between the lower decalage of the bar complex over the monoid of the integers and the nerve of the divisibility poset:
  $$
  \Decbot(B\N^\ast) \isopil N \divposet .
  $$
  In degree $k-1$ this isomorphism is given as the following bijection between $k$-tuples and
  $(k-1)$-multichains:
  $$
  (a_1,a_2,\dots,a_k) \mapsto a_1 \divides  a_1 a_2 \divides  a_1 a_2 a_3 \divides \dots\divides  a_1 a_2 a_3 \dots a_k .
  $$
  The key point is that the face and degeneracy maps match up too,
  assembling the bijections into an isomorphism of simplicial sets. As an
  example, the bottom face map $d^{\Decbot(B\N^\ast)}_0$ is the original
  $d^{B\N^\ast}_1$ so its effect on $(a_1 ,a_2,a_3,a_4)\in (\N^\ast)^4$ is
  to multiply $a_1$ and $a_2$, giving $(a_1 a_2,a_3,a_4)$, which maps to
  $a_1 a_2 \divides a_1 a_2 a_3 \divides a_1 a_2 a_3 a_4$ by the
  isomorphism. We obtain the same value when the face map of the nerve of
  the divisibility poset $d^{N \divposet}_0$ acts on $a_1 \divides a_1 a_2
  \divides a_1 a_2 a_3 \divides a_1 a_2 a_3 a_4$.


\section{Coalgebraic and topological structures}
\label{sec:coalg}

The construction of incidence coalgebra, incidence algebra, and M\"obius
inversion makes sense for simplicial sets more general than nerves of
categories. The natural level of generality is that of {\em decomposition
spaces}~\cite{Galvez-Kock-Tonks:1512.07573} (also called $2$-Segal
spaces~\cite{Dyckerhoff-Kapranov:1212.3563}). They are simplicial sets more
general than nerves of categories, and in particular they include the classical 
cases of posets (Rota~\cite{Rota:Moebius})
and monoids (Cartier--Foata~\cite{Cartier-Foata}); 
see \cite{Galvez-Kock-Tonks:1612.09225} for many examples beyond posets and 
monoids. 
It is a general fact that the lower decalage of a
decomposition space is always the nerve of a category, and that the dec
map is always CULF~\cite{Galvez-Kock-Tonks:1512.07573}.
  
Many combinatorial coalgebras can be shown not to be the incidence
coalgebra of any category (or poset or monoid), but virtually all of them
can be realized by decomposition spaces (according to
\cite{Galvez-Kock-Tonks:1612.09225}). Where categories encode the ability
to compose, decomposition spaces owe their name to having instead the
ability to decompose, as happens abundantly for combinatorial structures,
even in situations where one cannot always compose. As a special case,
partially defined composition laws and multivalued composition laws can
often fruitfully be interpreted as defining decomposition spaces.
This happens in particular for partial monoids, as first observed by
Bergner et
al.~\cite{Bergner-Osorno-Ozornova-Rovelli-Scheimbauer:1609.02853}: the bar
complex of a partial monoid is a decomposition space. Its $k$-simplices are
given by {\em admissible} $k$-tuples of elements in the partial
monoid~\cite{Segal:1973}. The partial associativity condition satisfied by
partial monoids translates precisely into the axioms for a decomposition
space in this case. In particular, by the general theory of decomposition
spaces, partial monoids have incidence (co)algebras (where the
(co)multiplication becomes an everywhere-defined operation). The 
comultiplication is exactly the same as for genuine monoids: 
\begin{equation}\label{Delta{m}}
\Delta(m) = \sum_{m_1\cdot m_2=m} m_1 \otimes m_2.
\end{equation}
   
In this section we treat accordingly the partial monoid of noncrossing
partitions. In a first step (Subsection~\ref{ss1}), we perform the
constructions ``classically'', appealing only to standard Rota-type algebra
arguments. In a second step (Subsection~\ref{ss3}), just as we did for the
positive integers, we show how these results can be interpreted
categorically and simplicially.

    
\subsection{Noncrossing partitions coalgebras}
\label{ss1}

Before coming to the partial monoid of noncrossing partitions, we look at the the
poset of noncrossing partitions, and note the following analogy with the reduced
incidence algebra of the divisibility poset:

\begin{prop}\label{subsetDiv}
  The subspace $\redIncAlg{\parti(n),\divides\,}\subset \incAlg{\parti(n),\divides\,}$
  of functions $f$ whose  value $f(\alpha,\beta)$ depends only on the relative 
  Kreweras complement $K_\beta(\alpha)$ is a subalgebra, called 
  the {\em reduced incidence algebra}.
\end{prop}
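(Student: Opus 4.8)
The plan is to mimic the classical computation for the reduced incidence algebra of the divisibility poset recalled in Subsection~\ref{ssec:standardcons}, with the relative Kreweras complement $K_\gamma(\alpha)$ playing the role of the quotient $m/n$. Concretely, I want to show that if $f(\alpha,\beta)=F(K_\beta(\alpha))$ and $g(\alpha,\beta)=G(K_\beta(\alpha))$, then the convolution $f\ast g$ is again of this form, its value on $(\alpha,\gamma)$ depending only on $K_\gamma(\alpha)$; closure under convolution is then immediate, and only the unit remains to be checked.

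The key algebraic input is the multiplicativity of relative Kreweras complements along a chain: for $\alpha\divides\beta\divides\gamma$ one has
$$
K_\gamma(\alpha)=K_\beta(\alpha)\circ K_\gamma(\beta).
$$
I would prove this directly from the definitions. Writing $\delta_1:=K_\beta(\alpha)$ and $\delta_2:=K_\gamma(\beta)$, so that $\alpha\circ\delta_1=\beta$ and $\beta\circ\delta_2=\gamma$, associativity of the composition product (Proposition~\ref{ispartial}) gives $\alpha\circ(\delta_1\circ\delta_2)=(\alpha\circ\delta_1)\circ\delta_2=\gamma$; in particular $(\delta_1,\delta_2)$ is admissible, and uniqueness of the relative complement (Definition~\ref{Kco2rel}, which rests on the strict monotonicity of Lemma~\ref{monoto}) forces $\delta_1\circ\delta_2=K_\gamma(\alpha)$. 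This is essentially the content of Proposition~\ref{divassoc}.

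Next I would establish, for fixed $\alpha\divides\gamma$ with $\delta:=K_\gamma(\alpha)$, a bijection between the intermediate partitions $\beta$ with $\alpha\divides\beta\divides\gamma$ and the admissible factorizations $\delta_1\circ\delta_2=\delta$. The forward map sends $\beta$ to $(K_\beta(\alpha),K_\gamma(\beta))$, which lands among the admissible factorizations of $\delta$ by the displayed identity; the inverse sends $(\delta_1,\delta_2)$ to $\beta:=\alpha\circ\delta_1$, with $\alpha\divides\beta$ because $\alpha\divides\alpha\circ\delta_1$ and $\beta\divides\gamma$ because $\beta\circ\delta_2=\alpha\circ(\delta_1\circ\delta_2)=\gamma$. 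That these two maps are mutually inverse follows again from uniqueness of relative complements. Reindexing the convolution sum along this bijection gives
$$
(f\ast g)(\alpha,\gamma)=\sum_{\alpha\divides\beta\divides\gamma}F(K_\beta(\alpha))\,G(K_\gamma(\beta))=\sum_{\delta_1\circ\delta_2=K_\gamma(\alpha)}F(\delta_1)\,G(\delta_2),
$$
the right-hand side depending only on $K_\gamma(\alpha)$. Thus $f\ast g$ lies in $\redIncAlg{\parti(n),\divides\,}$, the associated function being the convolution of $F$ and $G$ in the incidence algebra of the partial monoid. Finally, the unit $\partial$ is reduced: since $K_\gamma(\alpha)=0_n$ exactly when $\alpha=\gamma$, we have $\partial(\alpha,\gamma)=F(K_\gamma(\alpha))$ for $F$ the Kronecker delta at $0_n$, which is precisely the convolution unit of the partial monoid. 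Hence $\redIncAlg{\parti(n),\divides\,}$ is a subalgebra.

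The main obstacle is the chain-multiplicativity relation together with the admissibility bookkeeping in the bijection: at each step one must check that the relevant pairs are admissible (so that $\circ$ is genuinely defined) and then invoke uniqueness of the composition product to identify the factors. Once these are in place, the reindexing is the same formal manoeuvre as in the classical divisibility case, and it makes the analogy with the reduced incidence algebra of $\divposet$ precise.
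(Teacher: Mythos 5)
Your proof is correct and takes essentially the same route as the paper's: your chain identity $K_\gamma(\alpha)=K_\beta(\alpha)\circ K_\gamma(\beta)$ is exactly the content of Proposition~\ref{divassoc}, and your bijection between intermediate partitions and admissible factorizations $\delta_1\circ\delta_2=K_\gamma(\alpha)$ is the paper's reindexing of the convolution sum over divisors $\phi\divides K_\beta(\alpha)$ (with $\delta_1=\phi$, $\delta_2=K_{K_\beta(\alpha)}(\phi)$). Your only additions are phrasing the result explicitly as the partial-monoid convolution and checking the unit $\partial$, both of which the paper leaves implicit.
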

\noindent
(This result can hardly be considered new; versions of it go back to 
Speicher~\cite{Speicher:multiplicative}.)
The key ingredient in the proof is the following lemma, which is a 
variation of Proposition~\ref{divassoc}:
\begin{lemma}[{Nica--Speicher~\cite[Lemma 
  18.9]{Nica-Speicher:Lectures}}]\label{Krel}
  Given noncrossing partitions $\alpha \divides \beta \divides \gamma$, we have
  \begin{enumerate}
	\item $K_{\beta}(\alpha) \divides K_\gamma(\alpha)$
	\item There are canonical isomorphisms of intervals
	$
  [K_{\beta}(\alpha), K_\gamma(\alpha)] \simeq [0_n , K_\gamma(\beta)] \simeq 
  [\beta,\gamma].
  $
\item $K_{K_\gamma(\alpha)}(K_\beta(\alpha)) = K_\gamma(\beta)$.
  \end{enumerate}
\end{lemma}
\begin{cor}\label{cor:K_beta(alpha)}
  Given noncrossing partitions $\alpha \divides \gamma$,
  there is a canonical isomorphism of intervals
  \begin{eqnarray*}
    {}[\alpha,\gamma] & \isopil & [0_n, K_\gamma(\alpha)]  \\
    \beta & \mapsto & \sigma:=K_\beta(\alpha)
  \end{eqnarray*}
\end{cor}
\begin{proof}[Proof of Proposition~\ref{subsetDiv}]
	Suppose $f$ and $g$ are functions that only depend on the Kreweras complement.
	This assumption is used in the second equality below, together with
	Lemma~\ref{Krel}:
    $$(f\ast g)(\alpha,\gamma) 
	= \sum_{\alpha\divides\beta\divides\gamma} f(\alpha,\beta)\,g(\beta,\gamma)
	= \sum_{\alpha\divides\beta\divides\gamma} f(0_n,K_\beta(\alpha))\,g(K_\beta(\alpha),K_\gamma(\alpha))
	$$ $$= \sum_{0_n\divides\sigma\divides K_\gamma(\alpha)} f(0_n,\sigma)\,g(\sigma,K_\gamma(\alpha))
	= (f\ast g)(0_n,K_\gamma(\alpha)) .
	$$
	The change of summation in the third step of the calculation is justified by 
	Corollary~\ref{cor:K_beta(alpha)}.
  \end{proof}
  In analogy with the case of the divisibility poset, the functions here can 
  also be characterized as those with $f(\alpha,\beta) = F( K_\beta(\alpha))$
  for some function on $\parti(n)$.  These functions in turn form the incidence 
  coalgebra of the partial monoid, which can be considered as a quotient of the 
  raw incidence coalgebra of the noncrossing partitions lattice, as we now proceed to explain.

Any (locally finite) partial monoid gives rise to a coalgebra by the following
process which is the same as for monoids and relies on associativity, unitality
and the fact that $(\alpha\circ\beta)\circ\gamma$ is defined if and only if
$\alpha\circ(\beta\circ\gamma)$ is. The proofs of coassociativity and counitality
(as well as M\"obius inversion, in many cases) are also the same, or one can
invoke the more general results for decomposition
spaces~\cite{Galvez-Kock-Tonks:1512.07573}.

\begin{defi}
  The incidence coalgebra $\incCoalg{\parti(n), \circ}$ is spanned as a 
  vector space by $\parti(n)$, and has comultiplication
  induced by the partial monoid structure of
  $(\parti(n),\circ\,)$:
  $$
  \Delta_{\circ}(\pi) = \sum\limits_{\alpha\circ\beta=\pi}\alpha\otimes \beta 
  = \sum\limits_{\alpha \divides \pi}\alpha\otimes K_\pi(\alpha),
  $$
  with counit $\partial_{\circ}(\pi):=1$ if $\pi=0_n$ and zero otherwise.
\end{defi}

We have finally a compatibility property, similar to the one established in
the framework of classical M\"obius inversion. It will follow from
general theoretical arguments of Subsection~\ref{ss3} below, but we also provide
here a direct proof.

\begin{prop}\label{homcaolnc}
  The map
  \begin{eqnarray*}
    \Psi:\incCoalg{\parti(n),\divides\;} & \longrightarrow & 
	\incCoalg{\parti(n),\circ}  \\
    {}[\alpha,\beta] & \longmapsto & K_\beta(\alpha)
  \end{eqnarray*}
  is a homomorphism of coalgebras.
\end{prop}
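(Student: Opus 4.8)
The plan is to verify the two defining properties of a morphism of coalgebras: compatibility with the comultiplications and with the counits. I would fix a basis element $[\alpha,\beta]$ with $\alpha\divides\beta$ and compute both composites. Applying the comultiplication of Definition~\ref{incidal} followed by $\Psi\otimes\Psi$ gives
$$
(\Psi\otimes\Psi)\Delta([\alpha,\beta])=\sum_{\alpha\divides\gamma\divides\beta}K_\gamma(\alpha)\otimes K_\beta(\gamma),
$$
whereas applying $\Psi$ first and then the partial-monoid comultiplication $\Delta_\circ$ gives
$$
\Delta_\circ(\Psi([\alpha,\beta]))=\Delta_\circ(K_\beta(\alpha))=\sum_{\phi\circ\psi=K_\beta(\alpha)}\phi\otimes\psi.
$$
Matching these two sums term by term is the entire content of the comultiplication compatibility.

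The key step is a bijection between the index set $\{\gamma\mid\alpha\divides\gamma\divides\beta\}$ and the set of factorizations $\{(\phi,\psi)\mid\phi\circ\psi=K_\beta(\alpha)\}$. Writing $\nu:=K_\beta(\alpha)$, I would send $\gamma$ to the pair $(K_\gamma(\alpha),K_\beta(\gamma))$, which is precisely the pair of tensor factors on the left-hand side above. I must then check two things. First, that this pair is a genuine factorization of $\nu$: here I would invoke the identity $K_\beta(\gamma)=K_{K_\beta(\alpha)}(K_\gamma(\alpha))=K_\nu(K_\gamma(\alpha))$, which was already extracted from Proposition~\ref{divassoc} in the proof of Proposition~\ref{subsetDiv}; combined with the defining equation of the relative complement (Definition~\ref{Kco2rel}) it yields immediately $K_\gamma(\alpha)\circ K_\nu(K_\gamma(\alpha))=\nu$, as required. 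Second, that the assignment is a bijection: I would reuse the correspondence recorded inside the proof of Proposition~\ref{subsetDiv}, namely that the triples $\alpha\divides\gamma\divides\beta$ correspond through $\gamma=\alpha\circ\phi$ to the partitions $\phi$ with $0_n\divides\phi\divides\nu$, and that these in turn parametrize all factorizations $\phi\circ\psi=\nu$, since $\psi=K_\nu(\phi)$ is forced by uniqueness of the relative complement (a consequence of the strict monotonicity of Lemma~\ref{monoto}). The inverse map sends $(\phi,\psi)$ back to $\gamma:=\alpha\circ\phi$, and monotonicity (Lemma~\ref{monoto}) gives $\alpha\divides\gamma\divides\alpha\circ\nu=\beta$, so $\gamma$ is a legitimate index.

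For the counits I would observe that $\partial_{0_n}(\Psi([\alpha,\beta]))=\partial_{0_n}(K_\beta(\alpha))$ equals $1$ exactly when $K_\beta(\alpha)=0_n$; since $K_\beta$ is an order-reversing bijection of $[0_n,\beta]$ with $K_\beta(\beta)=0_n$, this occurs if and only if $\alpha=\beta$, which is precisely when the source counit $\partial([\alpha,\beta])$ equals $1$. Hence the counits agree and $\Psi$ is a coalgebra morphism. The only real work lies in the bookkeeping of the bijection and in confirming that the two tensor slots line up correctly under it; the essential algebraic input, that $K_\beta(\gamma)$ is the relative complement of $K_\gamma(\alpha)$ inside $\nu=K_\beta(\alpha)$, is exactly Proposition~\ref{divassoc}, so I anticipate no genuinely new obstacle beyond assembling facts already established.
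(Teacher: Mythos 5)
Your proof is correct and follows essentially the same route as the paper's: both rest on the bijection $\gamma \mapsto K_\gamma(\alpha)$ between intermediate partitions $\alpha\divides\gamma\divides\beta$ and factorizations of $K_\beta(\alpha)$ (via $\gamma=\alpha\circ\phi$, with the second factor forced by uniqueness of the relative complement), with Proposition~\ref{divassoc} in the form $K_\beta(\gamma)=K_{K_\beta(\alpha)}(K_\gamma(\alpha))$ supplying the key identification of the second tensor slots. The only difference is that you also verify counit compatibility, a routine check the paper leaves implicit.
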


\begin{proof}
 Given $\alpha \divides \gamma$, we check that $\Psi$ preserves the 
  comultiplication:
  $$
  (\Psi\otimes\Psi)( \Delta([\alpha,\gamma]))
  =\sum\limits_{\alpha\divides \beta \divides \gamma}\Psi([\alpha,\beta])\otimes\Psi([\beta,\gamma ])
  =\sum\limits_{\alpha\divides \beta \divides \gamma} K_\beta(\alpha) \otimes K_\gamma(\beta)
  =\sum\limits_{\alpha\divides \beta \divides \gamma} K_\beta(\alpha) \otimes 
  K_{K_\gamma(\alpha)}(K_\beta(\alpha)),
$$
where the last step used Lemma~\ref{Krel} (item 3).
Now we use the isomorphism $[\alpha,\gamma] \simeq [0_n, K_\gamma(\alpha)]$, 
$\beta \mapsto \sigma := K_\beta(\alpha)$ of Corollary~\ref{cor:K_beta(alpha)} to 
continue the calculation:
$$
  =\sum\limits_{0_n\divides \sigma \divides K_\gamma(\alpha)} \sigma \otimes 
  K_{K_\gamma(\alpha)}(\sigma) 
= \sum_{\sigma\circ \pi = K_\gamma(\alpha)} \sigma \otimes \pi 
= \Delta_\circ( K_\gamma(\alpha))
= \Delta_\circ( \Psi([\alpha,\gamma])
$$
which establishes the comultiplicativity of $\Psi$. (The fact that $\Psi$ preserves the counit is obvious.)

\end{proof}

\begin{remark}
 Proposition~\ref{subsetDiv} and Lemma~\ref{homcaolnc} together identify
  the reduced incidence coalgebra $\redIncCoalg{\parti(n),\divides\,}$ of the
  poset with the incidence coalgebra $\incCoalg{\parti(n),\circ}$ of the partial
  monoid. The reduction taken here --- identifying intervals in $\parti(n)$ if
  they have the same relative Kreweras complements --- thus matches the partial
  monoid. It should be noted that there are other possibilities for reduction,
  that is, other natural quotients to consider. One quotient construction
  consists in identifying intervals if
  they have the same {\em fibre monomial}, namely for $\alpha\divides \beta$ the
  same family of preimages of the blocks in $\beta$. This is the reduction used in
  our previous paper \cite{EbrahimiFard-Foissy-Kock-Patras:1907.01190} in
  connection with the block-substitution operad, and later studied further by
  Celestino et al.~\cite{Celestino-EbrahimiFard-Nica-Perales-Witzman}. To set up
  this reduction, the natural thing is to define a coalgebra map to the 
  polynomial algebra on noncrossing partitions (with comultiplication induced by 
  a certain operad structure), sending an interval to
  its fibre monomial. 
  The two reductions are not comparable: neither factors through the other.
  The relative Kreweras complement does not determine the fibre monomial and the 
  fibre monoial does not determine the Kreweras complement.

  Another possible notion of reduction, 
  which goes further than both of the previous two options, is to 
  identify intervals if their Kreweras complements have the same type 
  (sizes of blocks).
  (This reduction can be factored either through the
  Kreweras complement or through the fibre monomial.)
  This is the reduction used by 
  Speicher~\cite{Speicher:multiplicative}, although formulated differently.
  In particular, Speicher's families of multiplicative functions,
  a particular class of families of linear forms defined
  simultaneously on all the $\incCoalg{{\parti(n),\divides\,}},\ n\in\N$,
  have the property of
  only depending on the relative Kreweras complement, and can therefore
  be considered as families of functions on all the $\incCoalg{\parti(n),\circ}$, 
  $n\in \N$.

\end{remark}

 
\subsection{Categorical and simplicial aspects}
\label{ss3}

In this subsection we show that the partial monoid $(\parti(n),\circ)$ relates
to the noncrossing partitions lattice $(\parti(n),\divides\;)$ precisely as
the multiplicative monoid $\multmonoid$ relates to the divisibility poset
$\divposet$.

\begin{prop}\label{Dec-iso}
  The lower decalage of the (bar complex of the) partial monoid 
  $(\parti(n), \circ)$ is isomorphic to the (nerve of the) poset of noncrossing partitions
  $(\parti(n),\divides\;)$.
\end{prop}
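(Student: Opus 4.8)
The plan is to construct the isomorphism degreewise and then check compatibility with every structure map, in exact parallel with the integer case treated at the close of Subsection~\ref{ssec:categoricalinterp}. Since the bar complex of the partial monoid $(\parti(n),\circ)$ has the admissible $k$-tuples as its $k$-simplices (with $X_0=*$ and $X_1=\parti(n)$), its lower decalage satisfies $\Decbot(B(\parti(n),\circ))_k = B(\parti(n),\circ)_{k+1}$, the set of admissible $(k{+}1)$-tuples. On the other side, the $k$-simplices of the order complex $N(\parti(n),\divides)$ are the multichains $\beta_0\divides\beta_1\divides\cdots\divides\beta_k$ of length $k$. The degreewise bijection is exactly the one already furnished by Proposition~\ref{canobij} (items $1\iff 3$),
$$
(\alpha_1,\ldots,\alpha_{k+1}) \longmapsto \bigl(\alpha_1\divides \alpha_1{\circ}\alpha_2\divides\cdots\divides \alpha_1{\circ}\cdots{\circ}\alpha_{k+1}\bigr),
$$
which is well defined since $\beta\divides\beta\circ\gamma$ always holds, and whose inverse is given by iterated relative Kreweras complements. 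Writing $\beta_j := \alpha_1\circ\cdots\circ\alpha_{j+1}$, the core of the argument is to verify that this family of bijections commutes with the face and degeneracy operators.

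First I would handle the face maps, where the only real bookkeeping is the index shift of the decalage: its $i$-th face operator $d_i$ is the $(i{+}1)$-th face operator $d_{i+1}$ of the bar complex. Hence the bottom face $d_0$ of the decalage is the \emph{inner} bar-complex face $d_1$, which composes the first two entries to give $(\alpha_1{\circ}\alpha_2,\alpha_3,\ldots,\alpha_{k+1})$; under the bijection this becomes $\beta_1\divides\cdots\divides\beta_k$, precisely the face $d_0$ of the nerve that deletes the smallest element $\beta_0$. For $1\leq i\leq k-1$, the decalage face $d_i$ is the bar-complex face $d_{i+1}$ composing $\alpha_{i+1}{\circ}\alpha_{i+2}$; by associativity of $\circ$ (Proposition~\ref{ispartial}) the associated multichain loses exactly the term $\beta_i$, matching the interior face $d_i$ of the nerve. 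Finally the top decalage face $d_k$ is the \emph{outer} bar-complex face $d_{k+1}$ deleting the last entry $\alpha_{k+1}$, which corresponds to deleting $\beta_k$, the top face of the nerve. Thus inner multiplications in the bar complex become interior and bottom deletions in the order complex, while the outer deletion stays a deletion.

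For the degeneracies I would use that the unit of the partial monoid is $0_n$: the decalage degeneracy $s_i$, being the bar-complex degeneracy $s_{i+1}$, inserts $0_n$ into the tuple, and since $\gamma\circ 0_n=\gamma$ this repeats the term $\beta_i$ in the associated multichain --- exactly the degeneracy of the nerve. As Proposition~\ref{canobij} already supplies bijectivity in every degree, and these checks show the bijections respect every $d_i$ and $s_i$, they assemble into the asserted isomorphism of simplicial sets
$$
\Decbot(B(\parti(n),\circ)) \isopil N(\parti(n),\divides).
$$

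I expect the main obstacle to be organizational rather than computational: keeping straight the decalage index shift together with the two kinds of face map, so as to confirm that the \emph{bottom} face of the decalage --- an inner multiplication in the bar complex --- corresponds to deletion of the smallest element in the poset nerve. This is the faithful noncommutative analogue of the integer computation displayed at the end of Subsection~\ref{ssec:categoricalinterp}, now resting on the associativity of $\circ$ (Proposition~\ref{ispartial}) and the closed formula for iterated composition products (Lemma~\ref{translationlemma}) in place of ordinary multiplication of positive integers.
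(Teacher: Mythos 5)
Your proposal is correct and follows essentially the same route as the paper: the degreewise bijection comes from Proposition~\ref{canobij} (admissible $(k{+}1)$-tuples $\leftrightarrow$ $k$-multichains, via $\beta_j=\alpha_1\circ\cdots\circ\alpha_{j+1}$ and inverse by iterated relative Kreweras complements), followed by checking compatibility with the shifted face and degeneracy maps. The only difference is one of thoroughness: the paper verifies the simplicial identities on a sample $2$-simplex and declares the rest analogous to the integer case, whereas you carry out the verification uniformly for all degrees and all indices $i$, including the degeneracies via the unit $0_n$.
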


\begin{proof}
  As explained, the bar complex $X$ of $(\parti(n), \circ)$ has 
  $X_0=*$ (singleton) and $X_1$ the set of noncrossing 
  partitions. The set $X_2$ is the set of admissible pairs of 
  noncrossing partitions. More generally $X_k$ is the set 
  of admissible $k$-tuples of noncrossing partitions.
  A $k$-simplex of the lower decalage is thus
  an admissible $(k{+}1)$-tuple, and by Proposition~\ref{canobij} this defines
  uniquely a $k$-multichain in the noncrossing partitions lattice, i.e.~a 
  $k$-simplex in the nerve of $(\parti(n),\divides\;)$. So in each 
  simplicial degree we have the required bijection.

  The more interesting part is to check also that the face and degeneracy maps match
  up. This check is completely analogous to the case of 
  the divisibility poset and the multiplicative monoid of positive integers.
  As a sample, let us consider a $2$-simplex in the lower decalage 
  of $X$ (so an admissible $3$-tuple)
  $$
  (\alpha_1,\alpha_2,\alpha_3) .
  $$
  The three faces (applying $d_0, d_1, d_2$ of the decalage, which are $d_1, d_2, d_3$ of the bar complex) are, respectively
  $$
  (\alpha_1\circ\alpha_2,\alpha_3), \quad   
  (\alpha_1,\alpha_2\circ\alpha_3), \quad
  (\alpha_1,\alpha_2) ,
  $$
  and their images in the nerve of the noncrossing partitions poset
  under the bijections are
  the intervals
  $$
    \alpha_1\circ\alpha_2 \divides \alpha_1\circ\alpha_2\circ\alpha_3, \quad   
	\alpha_1 \divides \alpha_1\circ\alpha_2\circ\alpha_3, \quad
	\alpha_1 \divides \alpha_1\circ\alpha_2 .
  $$  
  On the other hand, the $3$-tuple $(\alpha_1,\alpha_2,\alpha_3)$ is sent 
  to the $2$-multichain
  $$
  \alpha_1 \divides \alpha_1 \circ \alpha_2 \divides \alpha_1 \circ 
  \alpha_2 \circ \alpha_3
  $$
  whose $3$ faces (applying $d_0,d_1, d_2$ of the poset's nerve) are the same three 
  intervals.
\end{proof}

\begin{remark}
  It is quite rare for a poset (or category) to admit an ``undecking'' like
  this --- a simplicial set whose decalage is the given category. According to
  Garner--Kock--Weber~\cite{Garner-Kock-Weber:1812.01750} the existence of
  an undecking amounts to the category having the structure of unary
  operadic category in the sense of Batanin and
  Markl~\cite{Batanin-Markl:1404.3886}, a general abstract framework for
  operad-like structures. In particular, the noncrossing partitions lattice
  is thus an example of an unary operadic category, where the so-called
  fibre functor is given by the Kreweras complement. As far as we know,
  this example of operadic category had not been observed before --- it is
  of a rather different flavour than the usual examples of operadic
  categories. (The undecking relevant to operadic categories is actually
  {\em upper} decalage, not lower, but since the noncrossing partitions
  lattice is self-dual, in the present situation this detail is not
  important.)
\end{remark}

Composing the simplicial isomorphism of Proposition~\ref{Dec-iso}
with the dec map we get a CULF functor from the nerve of the 
noncrossing partitions lattice to the bar complex of the partial monoid.
In simplicial degree $1$, this map sends an interval in the noncrossing
partitions lattice to its relative Kreweras complement:
\begin{equation}\label{Kab}
  \alpha\divides \beta   \quad  \mapsto \quad K_\beta(\alpha) .
\end{equation}
This thus defines a coalgebra homomorphism 
$$
\incCoalg{\parti(n),\divides\;} \onto \incCoalg{\parti(n),\circ} 
$$
which
coincides with the one in Proposition~\ref{homcaolnc}, with the same
description as in \eqref{Kab}, and, dually, the algebra homomorphism
$$
\incAlg{\parti(n),\circ} \into \incAlg{\parti(n),\divides\;}
$$
on the
dual incidence algebras (the inclusion of
Proposition~\ref{subsetDiv} of those functions whose values on
an interval only depends on its relative Kreweras complement).

\small
\vspace{1cm}

\noindent {\bf Acknowledgments.} 
L.F. and F.P. acknowledge support from the grant ANR-20-CE40-0007
Combinatoire Alg\'ebrique, R\'esurgence, Probabilit\'es Libres et Op\'erades.
J.K. was supported by grant
PID2020-116481GB-I00 (AEI/FEDER, UE) of Spain and grant
10.46540/3103-00099B from the Independent Research Fund Denmark, and was
also supported through the Severo Ochoa and Mar\'ia de Maeztu Program for
Centers and Units of Excellence in R\&D grant number CEX2020-001084-M. 
K.E.F. was supported by the Research Council of Norway through project 302831 {\it{Computational Dynamics and Stochastics
on Manifolds}} (CODYSMA). K.E.F. and F.P. would also like to thank the Centre for Advanced Study (CAS) in Oslo for its warm hospitality and financial support during the research project {\it{Signatures for Images}} (SFI).
F.P. was also supported by the ANR -- FWF project PAGCAP.


\begin{thebibliography}{10}

\bibitem{Arizmendi-Vargas:2012}
{\sc Octavio Arizmendi {\rm and }Carlos Vargas}.
\newblock {\em Products of free random variables and {$k$}-divisible
  non-crossing partitions}.
\newblock Electron. Commun. Probab. {\bf 17} (2012), 1--13.

\bibitem{Batanin-Markl:1404.3886}
{\sc Michael Batanin {\rm and }Martin Markl}.
\newblock {\em Operadic categories and duoidal {D}eligne's conjecture}.
\newblock Adv. Math. {\bf 285} (2015), 1630--1687.
\newblock ArXiv:1404.3886.

\bibitem{Bergner-Osorno-Ozornova-Rovelli-Scheimbauer:1609.02853}
{\sc Julia~E. Bergner, Ang\'{e}lica~M. Osorno, Viktoriya Ozornova, Martina
  Rovelli, {\rm and }Claudia~I. Scheimbauer}.
\newblock {\em 2-{S}egal sets and the {W}aldhausen construction}.
\newblock Topology Appl. {\bf 235} (2018), 445--484.
\newblock ArXiv:1609.02853.

\bibitem{Biane:1997}
{\sc Philippe Biane}.
\newblock {\em Some properties of crossings and partitions}.
\newblock Discrete Math. {\bf 175} (1997), 41--53.

\bibitem{Cartier-Foata}
{\sc Pierre Cartier {\rm and }Dominique Foata}.
\newblock {\em Probl{\`e}mes combinatoires de commutation et
  r{\'e}arrangements}.
\newblock No.~85 in Lecture Notes in Mathematics. Springer-Verlag, Berlin, New
  York, 1969.
\newblock Republished in the ``books'' section of the S{\'e}minaire
  Lotharingien de Combinatoire.

\bibitem{Cartier-Patras:2021}
{\sc Pierre Cartier {\rm and }Fr{\'e}d{\'e}ric Patras}.
\newblock {\em Classical Hopf Algebras and Their Applications}.
\newblock Algebra and Applications. Springer International Publishing, 2021.

\bibitem{Celestino-EbrahimiFard-Nica-Perales-Witzman}
{\sc Adrian Celestino, 
Kurusch Ebrahimi-Fard, Alexandru Nica, Daniel Perales {\rm and }Leon Witzman}.
\newblock {\em Multiplicative and semi-multiplicative functions on 
non-crossing partitions, and relations to cumulants}. 
\newblock Adv. Appl. Math. {\bf 145} (2023), 102481.

\bibitem{Content-Lemay-Leroux}
{\sc Mireille Content, Fran{\c{c}}ois Lemay, {\rm and }Pierre Leroux}.
\newblock {\em Cat\'egories de {M}\"obius et fonctorialit\'es: un cadre
  g\'en\'eral pour l'inversion de {M}\"obius}.
\newblock J. Combin. Theory Ser. A {\bf 28} (1980), 169--190.

\bibitem{Dyckerhoff-Kapranov:1212.3563}
{\sc Tobias Dyckerhoff {\rm and }Mikhail Kapranov}.
\newblock {\em Higher Segal spaces}.
\newblock No. 2244 in Lecture Notes in Mathematics. Springer-Verlag, 2019.
\newblock ArXiv:1212.3563.

\bibitem{EbrahimiFard-Foissy-Kock-Patras:1907.01190}
{\sc Kurusch Ebrahimi-Fard, Lo{\"i}c Foissy, Joachim Kock, {\rm and
  }Fr{\'e}d{\'e}ric Patras}.
\newblock {\em Operads of (noncrossing) partitions, interacting bialgebras, and
  moment-cumulant relations}.
\newblock Adv. Math. {\bf 369} (2020), 107170, 55.
\newblock ArXiv:1907.01190.

\bibitem{EbrahimiFard-Patras:1409.5664}
{\sc Kurusch Ebrahimi-Fard {\rm and }Fr\'{e}d\'{e}ric Patras}.
\newblock {\em Cumulants, free cumulants and half-shuffles}.
\newblock Proc. A. {\bf 471} (2015), 20140843, 18pp.
\newblock ArXiv:1409.5664.

\bibitem{Edelman:1980}
{\sc Paul~H. Edelman}.
\newblock {\em Chain enumeration and noncrossing partitions}.
\newblock Discrete Math. {\bf 31} (1980), 171--180.

\bibitem{Galvez-Kock-Tonks:1612.09225}
{\sc Imma G{\'a}lvez-Carrillo, Joachim Kock, {\rm and }Andrew Tonks}.
\newblock {\em Decomposition spaces in combinatorics}.
\newblock Preprint, arXiv:1612.09225.

\bibitem{Galvez-Kock-Tonks:1512.07573}
{\sc Imma G{\'a}lvez-Carrillo, Joachim Kock, {\rm and }Andrew Tonks}.
\newblock {\em Decomposition spaces, incidence algebras and {M}\"{o}bius
  inversion {I}: {B}asic theory}.
\newblock Adv. Math. {\bf 331} (2018), 952--1015.
\newblock ArXiv:1512.07573.

\bibitem{Garner-Kock-Weber:1812.01750}
{\sc Richard Garner, Joachim Kock, {\rm and }Mark Weber}.
\newblock {\em Operadic categories and d{\'e}calage}.
\newblock Adv. Math. {\bf 377} (2021), 107440.
\newblock ArXiv:1812.01750.

\bibitem{BGhSchur02}
{\sc Anis~Ben Ghorbal {\rm and }Michael Sch\"urmann}.
\newblock {\em Non-commutative notions of stochastic independence}.
\newblock Math. Proc. Cambridge Philos. Soc. {\bf 133} (2002), 531--561.

\bibitem{Krawczyk-Speicher}
{\sc Bernadette Krawczyk {\rm and }Roland Speicher}.
\newblock {\em Combinatorics of free cumulants}.
\newblock J. Combin. Theory Ser. A {\bf 90} (2000), 267--292.

\bibitem{Kreweras:1972}
{\sc Germain Kreweras}.
\newblock {\em Sur les partitions non crois\'{e}es d'un cycle}.
\newblock Discrete Math. {\bf 1} (1972), 333--350.

\bibitem{Mastnak-Nica:TAMS2010}
{\sc Mitja Mastnak {\rm and }Alexandru Nica}.
\newblock {\em Hopf algebras and the logarithm of the {$S$}-transform in free
  probability}.
\newblock Trans. Amer. Math. Soc. {\bf 362} (2010), 3705--3743.

\bibitem{McCammond}
{\sc Jon McCammond}.
\newblock {\em Noncrossing partitions in surprising locations}.
\newblock Amer. Math. Monthly {\bf 113} (2006), 598--610.

\bibitem{Muraki02}
{\sc Naofumi Muraki}.
\newblock {\em The five independences as quasi-universal products}.
\newblock Infin. Dimens. Anal. Quantum Probab. Relat. Top. {\bf 5} (2002),
  113--134.

  \bibitem{Nica-Speicher:9604011}
{\sc Alexandru Nica {\rm and }Roland Speicher}.
\newblock {\em On the multiplication of free n-tuples of noncummutative
random variables}, 
with an appendix ``Alternative proofs for the
type II free Poisson variables and for the free compression 
results'' by D.~Voiculescu.
Amer. J. Math. {\bf 118} (1996), 799--837.


\bibitem{Nica-Speicher:Lectures}
{\sc Alexandru Nica {\rm and }Roland Speicher}.
\newblock {\em Lectures on the Combinatorics of Free Probability}, vol. 335 of
  London Mathematical Society Lecture Note Series.
\newblock Cambridge University Press, Cambridge, 2006.

\bibitem{Rota:Moebius}
{\sc Gian-Carlo Rota}.
\newblock {\em On the foundations of combinatorial theory. {I}. {T}heory of
  {M}\"obius functions}.
\newblock Z. Wahrscheinlichkeitstheorie und Verw. Gebiete {\bf 2} (1964),
  340--368 (1964).

\bibitem{Segal:1973}
{\sc Graeme Segal}.
\newblock {\em Configuration-spaces and iterated loop-spaces}.
\newblock Invent. Math. {\bf 21} (1973), 213--221.

\bibitem{Simion:2000}
{\sc Rodica Simion}.
\newblock {\em Noncrossing partitions}.
\newblock Discrete Math. {\bf 217} (2000), 367--409.

\bibitem{Speicher:multiplicative}
{\sc Roland Speicher}.
\newblock {\em Multiplicative functions on the lattice of noncrossing
  partitions and free convolution}.
\newblock Math. Ann. {\bf 298} (1994), 611--628.

\bibitem{Stanley:Catalan}
{\sc Richard~P. Stanley}.
\newblock {\em Catalan Numbers}.
\newblock Cambridge University Press, 2015.

\end{thebibliography}

\addcontentsline{toc}{section}{References}

\end{document}